\newtheorem{theorem}{Theorem}[section]
\newtheorem*{theorem*}{Theorem}
\newtheorem{lemma}[theorem]{Lemma}
\newtheorem*{lemma*}{Lemma}
\newtheorem{proposition}[theorem]{Proposition}
\newtheorem*{proposition*}{Proposition}
\theoremstyle{definition} 
\newtheorem{definition}[theorem]{Definition}
\newtheorem*{definition*}{Definition}
\theoremstyle{remark}
\newtheorem{remark}[theorem]{Remark}
\newcommand{\bi}{\begin{itemize}}
\newcommand{\ei}{\end{itemize}}
\newcommand{\bd}{\begin{description}}
\newcommand{\ed}{\end{description}}
\newcommand{\bqn}{\begin{eqnarray}}
\newcommand{\eqn}{\end{eqnarray}}
\newcommand{\la}{\langle}
\newcommand{\ra}{\rangle}
\newcommand{\g}{\gamma}
\newcommand{\al}{\alpha}
\newcommand{\eps}{\varepsilon}			
\newcommand{\lam}{\lambda}
\newcommand{\wt}[1]{\widetilde{#1}}
\newcommand{\mc}[1]{\mathcal{#1}}
\newcommand{\distr}{\mathcal{D}}				
\newcommand{\ver}{\mathcal{V}}					
\newcommand{\hor}{\mathcal{H}}					
\newcommand{\J}{\mathcal{J}}					
\newcommand{\R}{\mathbb{R}}						
\newcommand{\DD}{\mathcal{F}}					
\newcommand{\tanf}{\mathsf{T}}					
\newcommand{\y}{D}								
\newcommand{\Rcan}{\mathfrak{R}}				
\DeclareMathOperator{\spn}{\mathrm{span}}		
\DeclareMathOperator{\rank}{\mathrm{rank}}		
\author{Davide Barilari$^\flat$}
\address{$^\flat$Institut de Math\'ematiques de Jussieu-Paris Rive Gauche UMR CNRS 7586, Universit\'e Paris-Diderot,
Batiment Sophie Germain, Case 7012, 75205 Paris Cedex 13, France} \email{davide.barilari@imj-prg.fr}
\author{Luca Rizzi$^\sharp$}
\address{$^\sharp$CMAP \'Ecole Polytechnique, Palaiseau and \'Equipe INRIA GECO Saclay \^Ile-de-France, Paris, France}
\email{luca.rizzi@cmap.polytechnique.fr}
\subjclass[2010]{53C17, 53B21, 53B15}
\keywords{sub-Riemannian geometry, curvature, connection, Jacobi fields}
\date{\today}
\title[Jacobi fields and a canonical connection in sR geometry]{On Jacobi fields and a canonical connection in sub-Riemannian geometry}
\begin{document}

\begin{abstract}
In sub-Riemannian geometry the coefficients of the Jacobi equation define curvature-like invariants. We show that these coefficients can be interpreted as the curvature of a canonical Ehresmann connection associated to the metric, first introduced in \cite{lizel}. We show why this connection is naturally nonlinear, and we discuss some of its properties.
\end{abstract}

\maketitle
\tableofcontents

\section{Introduction}

A key tool for comparison theorems in Riemannian geometry is the Jacobi equation, i.e.~the differential equation satisfied by Jacobi fields. Assume $\gamma_{\eps}$ is a one-parameter family of geodesics on a Riemannian manifold $(M,g)$ satisfying
\begin{equation}\label{eq:geo0}
\ddot{\gamma}_{\eps}^{k}+\Gamma_{ij}^{k}(\gamma_{\eps})\dot\g^{i}_{\eps}\dot\g^{j}_{\eps}=0.
\end{equation}
The corresponding Jacobi field $J=\left.\frac{\partial}{\partial \eps}\right|_{\eps=0} \gamma_{\eps}$ is a vector field  defined along $\gamma=\gamma_{0}$, and satisfies the equation
\begin{equation}\label{eq:jacobo0}
\ddot{J}^{k}+2\Gamma_{ij}^{k}\dot J^{i}\dot\g^{j}+\frac{\partial \Gamma_{ij}^{k}}{\partial x^{\ell}}J^{\ell} \dot\g^{i}\dot\g^{j}=0.
\end{equation}
The Riemannian curvature is hidden in the coefficients of this equation. To make it appear explicitly, however, one has to write \eqref{eq:jacobo0} in terms of a parallel transported frame $X_{1}(t),\ldots,X_{n}(t)$ along $\g(t)$. Letting $J(t)=\sum_{i=1}^{n} J_{i}(t)X_{i}(t)$ one gets the following normal form:
\begin{equation}\label{eq:jacobo}
\ddot{J}_{i}+R_{ij}(t) J_{j}=0.
\end{equation}
Indeed the coefficients $R_{ij}$ are related with the curvature $R^{\nabla}$ of the unique linear, torsion free and metric  connection $\nabla$ (Levi-Civita) as follows
\[
R_{ij}=g(R^{\nabla}(X_{i},\dot \g)\dot \g,X_{j}).
\]
Eq.~\eqref{eq:jacobo} is the starting point to prove many results in Riemannian geometry. In particular, bounds on the curvature (i.e.~on the coefficients $R$, or its trace) have deep consequences on the analysis and the geometry of the underlying manifold.

In the sub-Riemannian setting this construction cannot be directly generalized. Indeed, the analogous of the Jacobi equation is a first-order system on the cotangent bundle that cannot be written as a second-order equation on the manifold. Still one can put it in a normal form, analogous to \eqref{eq:jacobo}, and study its coefficients \cite{lizel}. These appear to be the correct objects to bound in order to control the behavior of the geodesic flow and get comparison-like results (see for instance \cite{lizel2,BR-comparison}). Nevertheless one can wonder if these coefficients can arise, as in the Riemannian case, as the curvature of a suitable connection. We answer to this question, by showing that these coefficients are part of the curvature of a nonlinear canonical Ehresmann connection associated with the sub-Riemannian structure. In the Riemannian case this reduces to the classical, linear, Levi-Civita connection.

\subsection{The general setting} 
A sub-Riemannian structure is a triple $(M,\distr,g)$ where $M$ is smooth $n$-dimensional manifold, $\distr$ is a smooth, completely non-integrable vector sub-bundle of $TM$ and $g$ is a smooth scalar product on $\distr$.  Riemannian structures are included in this definition, taking $\distr=TM$. The sub-Riemannian distance is the infimum of the length of absolutely continuous admissible curves joining two points. Here admissible means that the curve is almost everywhere tangent to the distribution $\distr$, in order to compute its length via the scalar product $g$. The totally non-holonomic assumption on $\distr$ implies, by the Rashevskii-Chow theorem, that the distance is finite on every connected component of $M$, and the metric topology coincides with the one of $M$.  A more detailed introduction on sub-Riemannian geometry can be found in \cite{montgomerybook,nostrolibro,noterifford,notejean}.

In Riemannian geometry, it is well-known that the geodesic flow can be seen as a Hamiltonian flow on the cotangent bundle $T^{*}M$, associated with the Hamiltonian 
\begin{equation}
H(p,x)=\frac{1}{2}\sum_{i=1}^{n} \la p, X_{i}(x)\ra^{2},\qquad (p,x)\in T^{*}M,
\end{equation}
where $X_{1},\ldots,X_{n}$ is any local orthonormal frame for the Riemannian structure{, and the notation $\langle p, v\rangle$ denotes the action of a covector $p \in T_x^*M$ on a vector $v \in T_x M$.}
In the sub-Riemannian case, the Hamiltonian is defined by the same formula, where the sum is taken over a local orthonormal frame $X_{1},\ldots,X_{k}$ for $\distr$, with $k=\rank \distr$. The restriction of $H$ to each fiber is a degenerate quadratic form, but Hamilton's equations are still defined. These can be written as a flow on $T^*M$
\begin{equation}
\dot\lambda = \vec{H}(\lam),\qquad \lam\in T^{*}M,
\end{equation}
where $\vec{H}$ is the Hamiltonian vector field associated with $H$. This system cannot be written as a second order equation on $M$ as in \eqref{eq:geo0}. The projection $\pi : T^*M \to M$ of its integral curves are geodesics, i.e.~locally minimizing curves. In the general case, some geodesics may not be recovered in this way.  These are the so-called strictly abnormal geodesics \cite{montgomeryabnormal}, and they are related with hard open problems in sub-Riemannian geometry \cite{agrachevopen}.

In what follows, with a slight abuse of notation, the term ``geodesic'' refers to the not strictly abnormal ones.

An integral line of the Hamiltonian vector field $\lambda(t)=e^{t\vec{H}}(\lam) \in T^{*}M$, with initial covector $\lam$ is called \emph{extremal}. Notice that the same geodesic may be the projection of two different extremals. For these reasons, it is convenient to see the Jacobi equation as a first order equation for vector fields on $T^*M$, associated with an extremal, rather then a second order system on $M$, associated with a geodesic.

\section{Jacobi equation revisited}
For any vector field $V(t)$ along an extremal $\lambda(t)$ of the sub-Riemannian Hamiltonian flow, a dot denotes the Lie derivative in the direction of $\vec{H}$:
\begin{equation}
\dot{V}(t) := \left.\frac{d}{d\eps}\right|_{\eps=0} e^{-\eps \vec{H}}_* V(t+\eps).
\end{equation}
A vector field $\J(t)$ along $\lam(t)$ is called a \emph{sub-Riemannian Jacobi field} if it satisfies 
\begin{equation}\label{eq:defJF}
\dot{\J} = 0.
\end{equation}
The space of solutions of \eqref{eq:defJF} is a $2n$-dimensional vector space. The projections $J=\pi_{*}\J$ are vector fields on $M$ corresponding to one-parameter variations of $\g(t)=\pi(\lam(t))$ through geodesics; in the Riemannian case, they coincide with the classical Jacobi fields.

We intend to write \eqref{eq:defJF} using the natural symplectic structure $\sigma$ of $T^{*}M$.  First, observe that on $T^*M$ there is a natural smooth sub-bundle of Lagrangian\footnote{A Lagrangian subspace $L \subset \Sigma$ of a symplectic vector space $(\Sigma,\sigma)$ is a subspace with $\dim L = \dim\Sigma/2$ and $\sigma|_{L} = 0$.} spaces:
\begin{equation}
\ver_{\lambda} := \ker \pi_*|_{\lambda} = T_\lambda(T^*_{\pi(\lambda)} M).
\end{equation}
We call this the \emph{vertical subspace}. Then, pick a Darboux frame $\{E_i(t),F_i(t)\}_{i=1}^{n}$ along $\lambda(t)$.  It is natural to assume that $E_1,\ldots,E_n$ belong to the vertical subspace. To fix the ideas, one can think at the canonical basis $\{\partial_{p_i}|_{\lambda(t)},\partial_{x_i}|_{\lambda(t)}\}$ induced by a choice of coordinates $(x_1,\ldots,x_n)$ on $M$. 

In terms of this frame, $\J(t)$ has components $(p(t),x(t)) \in \R^{2n}$:
\begin{equation}
\J(t) = \sum_{i=1}^n p_{i}(t) E_{i}(t) + x_{i}(t) F_{i}(t).
\end{equation}
The elements of the frame satisfy
\begin{equation}\label{eq:Jacobiframe}
\begin{pmatrix}
\dot{E} \\
\dot{F}
\end{pmatrix} = 
\begin{pmatrix}
C_1(t)^{*} & -C_2(t) \\
R(t) & -C_1(t)
\end{pmatrix} \begin{pmatrix}
E\\
F
\end{pmatrix},
\end{equation}
for some smooth families of $n\times n$ matrices $C_1(t),C_2(t),R(t)$, where $C_2(t) = C_2(t)^*$ and $R(t)= R(t)^*$. We  stress  that the particular structure of the equations is implied solely by the fact that the frame is Darboux, that is
\begin{equation}
\sigma(E_i,E_j) = \sigma(F_i,F_j) = \sigma(E_i,F_j) -\delta_{ij} = 0, \qquad i,j=1,\ldots,n.
\end{equation}
Moreover, $C_2(t) \geq 0$ as a consequence of the non-negativity of the sub-Riemannian Hamiltonian. To see this, for a bilinear form $B: V\times V \to \R$ and $n$-tuples $v,w \in V$ let $B(v,w)$ denote the matrix $B(v_i,w_j)$. With this notation
\begin{equation}
C_2(t) = \sigma(\dot{E},E)|_{\lambda(t)} = 2H(E,E)|_{\lambda(t)} \geq 0,
\end{equation}
where we identified $\ver_{\lambda(t)} \simeq T_{\gamma(t)}^*M$ and we see the Hamiltonian as a symmetric bilinear form on fibers. In the Riemannian case, $C_2(t) > 0$.  In turn, the Jacobi equation, written in terms of the components $(p(t),x(t))$, becomes
\begin{equation}
\begin{pmatrix}\label{eq:Jacobicoord}
\dot{p} \\ \dot{x}
\end{pmatrix} = \begin{pmatrix} - C_1(t) & -R(t) \\ C_2(t) & C_1(t)^{*}
\end{pmatrix} \begin{pmatrix}
p \\ x
\end{pmatrix}.
\end{equation}

\section{The Riemannian case}
In the Riemannian case one can choose a suitable frame to simplify~\eqref{eq:Jacobicoord} as much as possible. 
 Let $X_1,\ldots,X_n$ be a parallel transported frame along the geodesic $\gamma(t)$.  Let $h_i:T^*M \to \R$ be the fiber-wise linear functions, defined by $h_i(\lambda):= \langle \lambda, X_i\rangle$. Indeed $h_{1},\ldots,h_{n}$ define coordinates on each fiber, and the vectors $\partial_{h_{i}}$. We define a moving frame along the extremal $\lambda(t)$ as follows
\begin{equation}
E_i:=\partial_{h_i}, \qquad F_i := -\dot{E}_i.
\end{equation}
One can recover the original parallel transported frame by projection, namely $\pi_* F_i|_{\lambda(t)} = X_i|_{\g(t)}$. We state here the properties of the moving frame. 
\begin{proposition}\label{p:riemcan}
The smooth moving frame $\{E_i,F_i\}_{i=1}^n$ satisfies:
\begin{itemize}
\item[(i)] $\pi_{*}E_i|_{\lambda(t)}=0$.
\item[(ii)] It is a Darboux basis, namely
\[
\sigma(E_i,E_j) = \sigma(F_i,F_j) = \sigma(E_i,F_j) - \delta_{ij} = 0, \qquad i,j=1,\ldots,n.
\]
\item[(iii)] The frame satisfies the structural equations
\[
\dot{E}_i = - F_i, \qquad \dot{F}_i = \sum_{j=1}^n R_{ij}(t) E_j,
\]
for some smooth family of $n\times n$ symmetric matrices $R(t)$.
\end{itemize}
If $\{\wt{E}_i,\wt{F}_j\}_{i=1}^n$ is another smooth moving frame along $\lambda(t)$ satisfying (i)-(iii), for some matrix $\wt{R}(t)$ then there exist a constant, orthogonal matrix $O$ such that 
\begin{equation}\label{eq:orthonormal}
\wt{E}_i|_{\lambda(t)} = \sum_{j=1}^n O_{ij}E_j|_{\lambda(t)}, \qquad  \wt{F}_i|_{\lambda(t)} = \sum_{j=1}^nO_{ij}F_j|_{\lambda(t)}, \qquad \wt{R}(t) = O R(t) O^*. 
\end{equation}
\end{proposition}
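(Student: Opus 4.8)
The plan is to verify properties (i)--(iii) directly from the definitions, and then prove the rigidity statement by tracking which choices in the construction were free.

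\medskip

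\noindent\textbf{Proof of (i)--(iii).} For (i), since $h_1,\dots,h_n$ are fiber-linear coordinates, the vectors $E_i = \partial_{h_i}$ are tangent to the fiber $T^*_{\gamma(t)}M$, hence lie in $\ver_{\lambda(t)} = \ker\pi_*$. For (iii), the key computation is $\dot E_i$. One computes the Lie derivative $\dot E_i = [\vec H, \partial_{h_i}]$ along the extremal; using that in the Riemannian case $H = \tfrac12\sum_j h_j^2$ in these coordinates, a direct bracket computation shows $\dot E_i$ projects to $\pi_*\dot E_i = -X_i|_{\gamma(t)}$ (this is where parallel transport of the frame enters: the Christoffel symbols drop out precisely because the $X_i$ are $\nabla$-parallel). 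Setting $F_i := -\dot E_i$ is then a definition; the content is that $\pi_* F_i = X_i$, which identifies $\{F_i\}$ with the parallel frame upon projection. For the second structural equation, differentiate again: $\dot F_i = -\ddot E_i$ must be expressed in the frame $\{E_j, F_j\}$. A priori $\dot F_i = \sum_j (a_{ij} E_j + b_{ij} F_j)$; one shows $b_{ij}=0$ by a symplectic argument and identifies $R_{ij} = a_{ij}$ with the Riemann curvature $g(R^\nabla(X_i,\dot\gamma)\dot\gamma, X_j)$ — this is essentially the classical derivation of the Jacobi equation in normal form \eqref{eq:jacobo}, read on $T^*M$. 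For (ii), the Darboux relations $\sigma(E_i,E_j)=0$ hold because both are vertical and $\ver$ is Lagrangian; $\sigma(E_i,F_j)=\delta_{ij}$ follows from $F_j = -\dot E_j$ together with the identity $\sigma(E_i,\dot E_j) = -2H(E_i,E_j) = -\delta_{ij}$ (using $C_2 = I$ in the Riemannian case, as noted in the excerpt after \eqref{eq:Jacobicoord}); finally $\sigma(F_i,F_j)=0$ follows by differentiating $\sigma(E_i,F_j)=\delta_{ij}$ and $\sigma(E_i,E_j)=0$ along $\vec H$, since $\vec H$ preserves $\sigma$, and using the structural equations already established.

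\medskip

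\noindent\textbf{Proof of rigidity.} Suppose $\{\wt E_i, \wt F_i\}$ is another frame satisfying (i)--(iii). By (i), $\wt E_i \in \ver_{\lambda(t)}$, so there is a smooth matrix $A(t)$ with $\wt E_i = \sum_j A_{ij} E_j$. Differentiating along $\vec H$ and using the structural equation $\dot{\wt E}_i = -\wt F_i$ together with $\dot E_j = -F_j$ gives $\wt F_i = \sum_j A_{ij} F_j - \sum_j \dot A_{ij} E_j$. Now impose the Darboux relations for the tilded frame: $\sigma(\wt E_i,\wt F_j)=\delta_{ij}$ forces $A A^* = I$ (using $\sigma(E_i,F_j)=\delta_{ij}$, $\sigma(E_i,E_j)=0$), so $A(t)$ is orthogonal for each $t$; and $\sigma(\wt F_i,\wt F_j)=0$ forces $\dot A A^* $ to be symmetric, while orthogonality of $A$ gives $\dot A A^* + A \dot A^* = 0$, i.e. $\dot A A^*$ is antisymmetric — hence $\dot A A^* = 0$, so $\dot A = 0$ and $A = O$ is a constant orthogonal matrix. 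Then $\wt E_i = \sum_j O_{ij} E_j$ and $\wt F_i = \sum_j O_{ij} F_j$, and substituting into the second structural equation $\dot{\wt F}_i = \sum_j \wt R_{ij}\wt E_j$ while comparing with $\dot F_j = \sum_k R_{jk}E_k$ yields $\wt R = O R O^*$.

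\medskip

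\noindent I expect the main obstacle to be the computation in (iii) identifying $\pi_*\dot E_i = -X_i$ and $R_{ij} = g(R^\nabla(X_i,\dot\gamma)\dot\gamma,X_j)$: this requires carefully unwinding the Hamiltonian lift, the coordinates $h_i$ dual to the parallel frame, and the fact that parallel transport kills the first-order Christoffel terms so that only the curvature survives at second order. The symplectic bookkeeping in (ii) and the rigidity argument are then routine linear algebra once the structural equations are in hand.
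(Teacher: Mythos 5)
The paper itself does not prove this proposition (it is stated as standard, with the identification \eqref{eq:trc} deferred to \cite[Appendix C]{BR-comparison}), so your proposal can only be judged on its own merits. The rigidity half is correct and complete: writing $\wt{E}=AE$, deducing $\wt{F}=AF-\dot{A}E$ from $\dot{\wt{E}}=-\wt{F}$, and then extracting $AA^*=\mathbb{I}$ from $\sigma(\wt{E}_i,\wt{F}_j)=\delta_{ij}$ and the symmetry of $\dot{A}A^*$ from $\sigma(\wt{F}_i,\wt{F}_j)=0$, which together with the antisymmetry coming from $\tfrac{d}{dt}(AA^*)=0$ forces $\dot{A}=0$, is exactly the right argument.

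In the existence half, however, there is a circularity. For any frame with $E_i$ vertical and $\sigma(E_i,F_j)=\delta_{ij}$, one has identically $b_{ij}=-\sigma(\dot{F}_i,E_j)=-\sigma(F_i,F_j)$ (differentiate $\sigma(F_i,E_j)$ along the flow, which preserves $\sigma$). So the statement ``$b_{ij}=0$'' and the statement ``$\sigma(F_i,F_j)=0$'' are the \emph{same} statement; you prove the first ``by a symplectic argument'' and then derive the second ``using the structural equations already established,'' which is a loop. Neither follows from symplectic bookkeeping alone: for a general (non-parallel) orthonormal frame one gets $\sigma(F_i,F_j)=(C_1)_{ij}-(C_1)_{ji}=2\Gamma_{ij}\neq 0$, where $\Gamma_{ij}=g(\nabla_{\dot\gamma}X_i,X_j)$ is the (antisymmetric) connection matrix. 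The genuine geometric input — $\{H,h_i\}=\sum_j h_j h_{[X_j,X_i]}$, whose value along $\lambda(t)$ is governed by $g(\nabla_{\dot\gamma}X_i,X_k)$ and vanishes precisely because the frame is parallel — is needed exactly here, to show $\pi_*\dot{F}_i=0$ (equivalently $b_{ij}=0$, equivalently $\sigma(F_i,F_j)=0$). Relatedly, you attribute the role of parallel transport to the identity $\pi_*\dot{E}_i=-X_i$, but that identity holds for \emph{any} orthonormal frame (the vertical correction to $\dot E_i$ does not affect the projection); parallel transport is irrelevant there and indispensable in the step you label as purely symplectic. Once $\sigma(F_i,F_j)=0$ is established by this direct computation, the rest of (ii) and (iii) — including the symmetry of $R$ — does follow from the general Darboux-frame structure \eqref{eq:Jacobiframe} with $C_1=0$, $C_2=\mathbb{I}$, as you intend.
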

Thanks to this proposition, the symmetric matrix $R(t)$ induces a well defined quadratic form $\mathfrak{R}_{\lam(t)}:T_{\gamma(t)}M \times T_{\gamma(t)}M\to \R$
\begin{equation}
\Rcan_{\lam(t)}(v,v) := \sum_{i,j=1}^n R_{ij}(t) v_{i}v_j , \qquad v = \sum_{i=1}^n v_i X_i|_{\gamma(t)}.
\end{equation}
Indeed one can prove that 
\begin{equation}\label{eq:trc}
\Rcan_{\lam(t)}(v,v) = g(R^\nabla(v,\dot{\gamma})\dot{\gamma},v), \qquad v \in T_{\gamma(t)}M.
\end{equation}
The proof is a standard computation that can be found, for instance, in \cite[Appendix C]{BR-comparison}. Then, in the Jacobi equation \eqref{eq:Jacobicoord}, one has $C_1(t) =0$, $C_2(t) = \mathbb{I}$ (in particular, they are constant matrices), and the only non-trivial block $R(t)$ is the curvature operator along the geodesic: 
\begin{equation}
\dot x=p, \qquad \dot{p} = -R(t) x,
\end{equation}

\section{The sub-Riemannian case}
The problem of finding a the set of Darboux frames normalizing the Jacobi equation has been first studied by Agrachev-Zelenko in \cite{agzel1,agzel2} and subsequently completed by Zelenko-Li in \cite{lizel} in the general setting of curves in the Lagrange Grassmannian. A dramatic simplification, analogous to the Riemannian one, cannot be achieved in the general sub-Riemannian setting. Nevertheless, it is possible to find a normal form of \eqref{eq:Jacobicoord} where the matrices $C_{1}$ and $C_{2}$ are constant. Moreover, the very block structure of these matrices depends on the geodesic and already contains  important geometric invariants, that we now introduce.

\subsection{Geodesic flag and Young diagram}\label{s:gfyd}


Let $\gamma(t)$ be a  geodesic. Recall that  $\dot{\gamma}(t) \in \distr_{\g(t)}$ for every $t$. Consider a smooth admissible extension of the tangent vector, namely a vector field $\tanf \in \Gamma(\distr)$ such that $\tanf|_{\gamma(t)} = \dot{\gamma}(t)$.
\begin{definition}\label{d:flag}
The \emph{flag of the geodesic} $\gamma(t)$ is the sequence of subspaces
\begin{equation}
\DD_{\gamma(t)}^i :=  \spn\{\mc{L}_\tanf^j (X)|_{\gamma(t)} \mid  X \in \Gamma(\distr),\, j \leq i-1\} \subseteq T_{\gamma(t)} M, \qquad \forall\, i \geq 1,
\end{equation}
where $\mc{L}_{\tanf}$ denotes the Lie derivative in the direction of $\tanf$.
\end{definition}
By definition, this is a filtration of $T_{\gamma(t)}M$, i.e.~$\DD_{\gamma(t)}^i \subseteq \DD_{\gamma(t)}^{i+1}$, for all $i \geq 1$. Moreover, $\DD_{\gamma(t)}^1 = \distr_{\gamma(t)}$. Definition~\ref{d:flag} is well posed, namely does not depend on the choice of the admissible extension $\tanf$ (see \cite[Sec. 3.4]{curvature}).
The \emph{growth vector} of the geodesic $\gamma(t)$ is the sequence of integer numbers 
\begin{equation}
\mathcal{G}_{\gamma(t)} := \{\dim \DD_{\gamma(t)}^1,\dim \DD_{\gamma(t)}^2,\ldots\}.
\end{equation}
A geodesic $\gamma(t)$, with growth vector $\mathcal{G}_{\gamma(t)}$, is said
\begin{itemize}
\item \emph{equiregular} if $\dim \DD_{\gamma(t)}^i$ does not depend on $t$ for all $i \geq 1$,
\item \emph{ample} if for all $t$ there exists $m \geq 1$ such that $\dim \DD_{\gamma(t)}^{m} = \dim T_{\gamma(t)}M$.
\end{itemize}
Equiregular (resp.\ ample) geodesics are the microlocal counterpart of equiregular (resp.\ bracket-generating) distributions. Let $d_i:= \dim \DD_\gamma^i - \dim \DD_\gamma^{i-1}$, for $i\geq 1$, be the increment of dimension of the flag of the geodesic at each step (with the convention $\dim\mathcal{F}^0=0$).
\begin{lemma}[\cite{curvature}]\label{l:decreasing}
For an equiregular, ample geodesic, $d_1 \geq d_2 \geq \ldots \geq d_m$.
\end{lemma}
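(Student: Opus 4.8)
The plan is to produce, at each fixed time $t_{0}$, a surjective linear map from the graded space $\DD^{i}_{\gamma(t_{0})}/\DD^{i-1}_{\gamma(t_{0})}$ onto $\DD^{i+1}_{\gamma(t_{0})}/\DD^{i}_{\gamma(t_{0})}$, induced by the Lie derivative $\mc{L}_{\tanf}$. Comparing dimensions then gives $d_{i+1}\le d_{i}$ for every $i$; equiregularity guarantees that the $d_{i}$ and the integer $m$ do not depend on $t_{0}$, and ampleness guarantees $m<\infty$, so $d_{1}\ge d_{2}\ge\dots\ge d_{m}$.

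First I would make sense of $\mc{L}_{\tanf}$ as an operator on vector fields along $\gamma$. Since $\tanf|_{\gamma(t)}=\dot\gamma(t)$, the geodesic is an integral curve of $\tanf$, so $e^{s\tanf}\circ\gamma(t)=\gamma(t+s)$, and for a smooth vector field $W$ along $\gamma$ one may set $(\mc{L}_{\tanf}W)(t):=\tfrac{d}{ds}\big|_{s=0}(e^{-s\tanf})_{*}W(t+s)$; for any vector field $\wt{W}$ on a neighbourhood of $\gamma$ with $\wt{W}|_{\gamma}=W$ this equals $[\tanf,\wt{W}]|_{\gamma(t)}$ and is independent of the extension. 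Applying this with $\wt{W}=\mc{L}_{\tanf}^{j}(X)$ yields $\mc{L}_{\tanf}\big(\mc{L}_{\tanf}^{j}(X)|_{\gamma}\big)=\mc{L}_{\tanf}^{j+1}(X)|_{\gamma}$, so by Definition~\ref{d:flag} the operator $\mc{L}_{\tanf}$ carries any section of $\DD^{i}$ along $\gamma$ that is a combination of the spanning fields $\mc{L}_{\tanf}^{j}(X_{a})|_{\gamma}$, $j\le i-1$ (where $X_{1},\dots,X_{k}$ is a local frame of $\distr$), into a section of $\DD^{i+1}$.

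The heart of the argument, and the step I expect to be the main obstacle, is to show that $\mc{L}_{\tanf}$ does not ``leak'' out of $\DD^{i}$: if $W$ is any smooth section of $\DD^{i}$ along $\gamma$ with $W(t_{0})=0$, then $(\mc{L}_{\tanf}W)(t_{0})\in\DD^{i}_{\gamma(t_{0})}$. Here equiregularity is indispensable: since $\dim\DD^{i}_{\gamma(t)}$ is constant, $\DD^{i}$ is a smooth vector sub-bundle of $\gamma^{*}TM$, and from the spanning fields $\mc{L}_{\tanf}^{j}(X_{a})|_{\gamma}$, $j\le i-1$, one can extract a smooth frame $Y_{1},\dots,Y_{r}$ of $\DD^{i}$ along $\gamma$, with $r=d_{1}+\dots+d_{i}$. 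Writing $W=\sum_{\beta}c_{\beta}Y_{\beta}$ with smooth $c_{\beta}$, the flow formula gives the Leibniz rule $\mc{L}_{\tanf}W=\sum_{\beta}\dot c_{\beta}Y_{\beta}+\sum_{\beta}c_{\beta}\,\mc{L}_{\tanf}Y_{\beta}$: the first sum lies in $\DD^{i}\subseteq\DD^{i+1}$ and the second in $\DD^{i+1}$ by the previous step, while $W(t_{0})=0$ forces $c_{\beta}(t_{0})=0$ for all $\beta$ (the $Y_{\beta}(t_{0})$ form a basis of $\DD^{i}_{\gamma(t_{0})}$), so $(\mc{L}_{\tanf}W)(t_{0})=\sum_{\beta}\dot c_{\beta}(t_{0})Y_{\beta}(t_{0})\in\DD^{i}_{\gamma(t_{0})}$. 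Without constant rank, the frame $Y_{\beta}$ is unavailable and this computation collapses, which is precisely why the lemma requires equiregularity.

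Finally I would assemble the map. Define $\Phi\colon\DD^{i}_{\gamma(t_{0})}\to\DD^{i+1}_{\gamma(t_{0})}/\DD^{i}_{\gamma(t_{0})}$ by picking a representation $v=\sum_{k}\mc{L}_{\tanf}^{j_{k}}(X_{a_{k}})|_{\gamma(t_{0})}$ with $j_{k}\le i-1$ and setting $\Phi(v):=\sum_{k}\mc{L}_{\tanf}^{j_{k}+1}(X_{a_{k}})|_{\gamma(t_{0})}\bmod\DD^{i}_{\gamma(t_{0})}$. Applying the previous step to the difference of two representations of the same $v$ (a section of $\DD^{i}$ along $\gamma$ vanishing at $t_{0}$) shows that $\Phi$ is well defined and linear; it annihilates $\DD^{i-1}_{\gamma(t_{0})}$ (there one takes $j_{k}\le i-2$, so $j_{k}+1\le i-1$), and it is surjective because every element of $\DD^{i+1}_{\gamma(t_{0})}$ is a combination of fields $\mc{L}_{\tanf}^{j}(X_{a})|_{\gamma(t_{0})}$ with $j\le i$, the terms with $j\le i-1$ already lying in $\DD^{i}_{\gamma(t_{0})}$ and those with $j=i$ being $\Phi$-images of $\mc{L}_{\tanf}^{i-1}(X_{a})|_{\gamma(t_{0})}\in\DD^{i}_{\gamma(t_{0})}$. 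Passing to the quotient, $\Phi$ descends to a surjection $\DD^{i}_{\gamma(t_{0})}/\DD^{i-1}_{\gamma(t_{0})}\twoheadrightarrow\DD^{i+1}_{\gamma(t_{0})}/\DD^{i}_{\gamma(t_{0})}$, which gives exactly $d_{i+1}\le d_{i}$, and hence the claim.
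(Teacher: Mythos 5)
Your argument is correct and is essentially the proof given in the cited reference \cite{curvature} (the present paper only quotes the lemma): one checks that $\mc{L}_{\tanf}$ induces a well-defined surjection $\DD^{i}_{\gamma(t_0)}/\DD^{i-1}_{\gamma(t_0)}\to\DD^{i+1}_{\gamma(t_0)}/\DD^{i}_{\gamma(t_0)}$, with equiregularity used exactly where you use it, to extract a smooth local frame of $\DD^{i}$ along $\gamma$ and rule out ``leakage''. Nothing essential is missing.
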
 
 The generic geodesic is ample and equiregular. More precisely, the set of points $x \in M$ such that there a exists non-empty Zariski open set $A_{x} \subseteq T_{x}^*M$ of initial covectors for which the associated geodesic is ample and equiregular with the same (maximal) growth vector, is open and dense in $M$. See \cite{curvature,lizel} for more details.
 
For an ample, equiregular geodesic
we can build a tableau $\y$ with $m$ columns of length $d_{i}$, for $i=1,\ldots,m$, as follows:
\begin{center}
\begin{tikzpicture}[x=0.26mm, y=0.26mm, inner xsep=0pt, inner ysep=0pt, outer xsep=0pt, outer ysep=0pt]
\path[line width=0mm] (61.05,70.00) rectangle +(127.98,100.00);
\draw(120.00,159.00) node[anchor=base]{\fontsize{9.39}{11.27}\selectfont $\ldots$};
\draw(120.00,139.00) node[anchor=base]{\fontsize{9.39}{11.27}\selectfont $\ldots$};
\draw(80.00,115.00) node[anchor=base]{\fontsize{9.39}{11.27}\selectfont $\vdots$};
\draw(100.00,115.00) node[anchor=base]{\fontsize{9.39}{11.27}\selectfont $\vdots$};
\definecolor{L}{rgb}{0,0,0}
\path[line width=0.30mm, draw=L] (130.00,170.00) -- (130.00,130.00) -- (150.00,130.00) -- (150.00,150.00) -- (170.00,150.00) -- (170.00,170.00) -- cycle;
\path[line width=0.30mm, draw=L] (150.00,170.00) -- (150.00,150.00);
\path[line width=0.30mm, draw=L] (130.00,150.00) -- (150.00,150.00);
\draw(122.00,95.00) node[anchor=base west]{\fontsize{9}{10.24}\selectfont \# boxes = $d_i$};
\definecolor{F}{rgb}{0.565,0.933,0.565}
\path[line width=0.30mm, draw=L, fill=F] (90.00,170.00) [rotate around={270:(90.00,170.00)}] rectangle +(40.00,20.00);
\path[line width=0.30mm, draw=L] (90.00,170.00) -- (70.00,170.00) -- (70.00,130.00) -- (90.00,130.00);
\path[line width=0.30mm, draw=L] (70.00,150.00) -- (110.00,150.00);
\path[line width=0.30mm, draw=L, fill=F] (90.00,110.00) [rotate around={270:(90.00,110.00)}] rectangle +(20.00,20.00);
\path[line width=0.30mm, draw=L] (90.00,70.00) [rotate around={90:(90.00,70.00)}] rectangle +(20.00,20.00);
\path[line width=0.30mm, draw=L] (70.00,90.00) -- (70.00,110.00) -- (90.00,110.00);
\end{tikzpicture}%

\end{center}
Indeed $\sum_{i=1}^m d_i = n=\dim M$ is the total number of boxes in $\y$. 

Consider an ample, equiregular geodesic, with Young diagram $\y$, with $k$ rows, of length $n_1,\ldots,n_k$. Indeed $n_1+\ldots+n_k = n$. The moving frame we are going to introduce is indexed by the boxes of the Young diagram. The notation $ai \in \y$ denotes the generic box of the diagram, where $a=1,\ldots,k$ is the row index, and $i=1,\ldots,n_a$ is the progressive box number, starting from the left, in the specified row. We employ letters $a,b,c,\dots$ for rows, and $i,j,h,\dots$ for the position of the box in the row. 

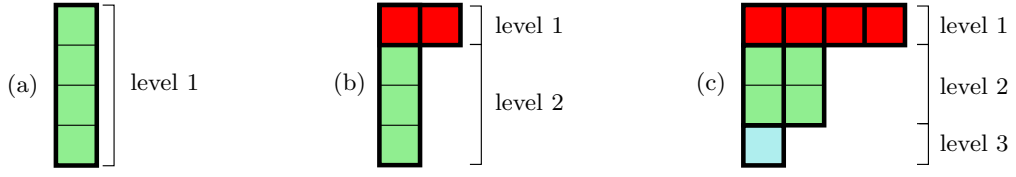
\begin{figure}[ht]
\centering
\begin{tikzpicture}[x=0.30mm, y=0.30mm, inner xsep=0pt, inner ysep=0pt, outer xsep=0pt, outer ysep=0pt]
\path[line width=0mm] (-75.00,91.40) rectangle +(300.63,71.04);
\definecolor{L}{rgb}{0,0,0}
\definecolor{F}{rgb}{0.565,0.933,0.565}
\path[line width=0.60mm, draw=L, fill=F] (-53.50,91.50) rectangle +(17.92,70.94);
\path[line width=0.15mm, draw=L] (-53.30,127.00) -- (-35.58,127.00);
\path[line width=0.15mm, draw=L] (-53.30,109.29) -- (-35.58,109.29);
\path[line width=0.15mm, draw=L] (-53.30,91.57) -- (-35.58,91.57);
\path[line width=0.15mm, draw=L] (-53.30,144.72) -- (-35.58,144.72);
\path[line width=0.60mm, draw=L, fill=F] (88.32,91.67) rectangle +(17.82,70.76);
\definecolor{F}{rgb}{1,0,0}
\path[line width=0.60mm, draw=L, fill=F] (88.43,144.72) rectangle +(35.43,17.72);
\path[line width=0.60mm, draw=L] (106.14,162.44) -- (106.14,144.72);
\path[line width=0.15mm, draw=L] (88.43,127.00) -- (106.14,127.00);
\path[line width=0.15mm, draw=L] (88.43,109.29) -- (106.14,109.29);
\path[line width=0.15mm, draw=L] (88.43,91.57) -- (106.14,91.57);
\definecolor{F}{rgb}{0.686,0.933,0.933}
\path[line width=0.60mm, draw=L, fill=F] (247.87,91.55) [rotate around={0:(247.87,91.55)}] rectangle +(17.71,17.73);
\definecolor{F}{rgb}{0.565,0.933,0.565}
\path[line width=0.60mm, draw=L, fill=F] (247.87,109.29) rectangle +(35.43,35.43);
\definecolor{F}{rgb}{1,0,0}
\path[line width=0.60mm, draw=L, fill=F] (247.87,144.72) rectangle +(70.86,17.72);
\path[line width=0.60mm, draw=L] (265.58,162.44) -- (265.58,109.29);
\path[line width=0.15mm, draw=L] (247.87,127.00) -- (283.30,127.00);
\path[line width=0.15mm, draw=L] (247.87,91.57) -- (265.58,91.57);
\path[line width=0.60mm, draw=L] (283.30,162.44) -- (283.30,144.72);
\path[line width=0.60mm, draw=L] (301.01,162.44) -- (301.01,144.72);
\draw(-20.85,124.71) node[anchor=base west]{\fontsize{9.39}{11.27}\selectfont level 1};
\path[line width=0.15mm, draw=L] (128.00,162.00) -- (133.00,162.00) -- (133.00,145.00) -- (128.00,145.00);
\path[line width=0.15mm, draw=L] (128.00,145.00) -- (133.00,145.00) -- (133.00,92.00) -- (128.00,92.00);
\draw(139.00,150.00) node[anchor=base west]{\fontsize{9.39}{11.27}\selectfont level 1};
\draw(139.00,116.00) node[anchor=base west]{\fontsize{9.39}{11.27}\selectfont level 2};
\path[line width=0.15mm, draw=L] (324.00,162.00) -- (329.00,162.00) -- (329.00,145.00) -- (324.00,145.00);
\draw(334.00,150.00) node[anchor=base west]{\fontsize{9.39}{11.27}\selectfont level 1};
\path[line width=0.15mm, draw=L] (324.00,145.00) -- (329.00,145.00) -- (329.00,110.00) -- (324.00,110.00);
\draw(334.00,123.00) node[anchor=base west]{\fontsize{9.39}{11.27}\selectfont level 2};
\path[line width=0.15mm, draw=L] (325.00,110.00) -- (329.00,110.00) -- (329.00,92.00) -- (324.00,92.00);
\draw(334.00,98.00) node[anchor=base west]{\fontsize{9.39}{11.27}\selectfont level 3};
\draw(68.00,125.00) node[anchor=base west]{\fontsize{9}{10.24}\selectfont (b)};
\draw(227.00,125.00) node[anchor=base west]{\fontsize{9}{10.24}\selectfont (c)};
\draw(-75.00,124.00) node[anchor=base west]{\fontsize{9}{10.24}\selectfont (a)};
\path[line width=0.15mm, draw=L] (-33.00,162.40) -- (-28.00,162.40) -- (-28.00,91.40) -- (-33.00,91.40);
\end{tikzpicture}%
\caption{Levels (shaded regions) and superboxes (delimited by bold lines) for the Young diagram of (a) Riemannian, (b) contact, (c) a more general structure. The Young diagram for any Riemannian geodesic has a single level and a single superbox.}\label{f:Yd2}
\end{figure}
We collect the rows with the same length in $\y$, and we call them \emph{levels} of the Young diagram.  In particular, a level is the union of $r$ rows $\y_1,\ldots,\y_r$, and $r$ is called the \emph{size} of the level. The set of all the boxes $ai \in\y$ that belong to the same column and the same level of $\y$ is called \emph{superbox}. We use Greek letters $\alpha,\beta,\dots$ to denote superboxes. Notice that that two boxes $ai$, $bj$ are in the same superbox if and only if $ai$ and $bj$ are in the same column of $\y$ and in possibly distinct row but with same length, i.e.~if and only if $i=j$ and $n_a = n_b$ (see Fig.~\ref{f:Yd2}).

The following theorem is proved in \cite{lizel}.
\begin{theorem}\label{p:can} Assume $\lam(t)$ is the lift of an ample and equiregular geodesic $\g(t)$ with Young diagram $\y$. Then
there exists a smooth moving frame $\{E_{ai},F_{ai}\}_{ai \in \y}$ along $\lambda(t)$ such that
\begin{itemize}
\item[(i)] $\pi_{*}E_{ai}|_{\lambda(t)}=0$.
\item[(ii)] It is a Darboux basis, namely
\[
\sigma(E_{ai},E_{bj}) = \sigma(F_{ai},F_{bj}) = \sigma(E_{ai},F_{bj}) = \delta_{ab}\delta_{ij}, \qquad ai,bj \in \y.
\]
\item[(iii)] The frame satisfies structural equations
\begin{equation}\label{zelframe}
\displaystyle\begin{cases}	
\dot{E}_{ai} = E_{a(i-1)} & a = 1,\dots,k,\quad i = 2,\dots, n_a,\\[0.1cm]
\dot{E}_{a1} = -F_{a1} & a= 1,\dots,k, \\[0.1cm]
\dot{F}_{ai} = \sum_{bj \in \y} R_{ai,bj}(t) E_{bj} - F_{a(i+1)} & a=1,\dots,k,\quad i = 1,\dots,n_a-1,\\[0.1cm]
\dot{F}_{an_a} = \sum_{bj \in \y} R_{an_a,bj}(t) E_{bj}  & a = 1, \dots,k,
\end{cases}
\end{equation}
for some smooth family of $n\times n$ symmetric matrices $R(t)$, with components $R_{ai,bj}(t) = R_{bj,ai}(t)$, indexed by the boxes of the Young diagram $\y$. The matrix $R(t)$ is \emph{normal}  in the sense of \cite{lizel} (see Appendix~\ref{s:appendixnormal}). 
\end{itemize}
If $\{\wt{E}_{ai},\wt{F}_{ai}\}_{ai \in \y}$ is another smooth moving frame along $\lambda(t)$ satisfying (i)-(iii), with some normal matrix $\wt{R}(t)$, then for any superbox $\alpha$ of size $r$ there exists an orthogonal constant $r\times r$ matrix $O^\alpha$ such that
\begin{equation}
\wt{E}_{ai} = \sum_{bj \in \alpha} O^\alpha_{ai,bj} E_{bj}, \qquad \wt{F}_{ai} = \sum_{bj \in \alpha} O^\alpha_{ai,bj} F_{bj}, \qquad ai \in \alpha.
\end{equation}
\end{theorem}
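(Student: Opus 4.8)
The plan is to reduce Theorem~\ref{p:can} to the normal moving frame theory for monotone curves in the Lagrange Grassmannian developed by Zelenko--Li in \cite{lizel}, by transporting the frame they produce back to $T^{*}M$ along the extremal. First I would fix $\lambda=\lambda(0)$ and set $\Sigma:=T_{\lambda}(T^{*}M)$, a $2n$-dimensional symplectic vector space with form $\sigma_{\lambda}$. Since $\vec{H}$ is a symplectic vector field and each $\ver_{\lambda(t)}$ is Lagrangian, the \emph{Jacobi curve}
\[
\Lambda(t):=(e^{t\vec{H}})^{-1}_{*}\,\ver_{\lambda(t)}\subset\Sigma
\]
is a smooth curve in the Lagrange Grassmannian $L(\Sigma)$. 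By the very definition of Jacobi field, $\dot{\J}=0$ is equivalent to $(e^{t\vec{H}})^{-1}_{*}\J(t)$ being a constant vector of $\Sigma$; thus the Jacobi equation \eqref{eq:defJF} trivializes in a transported basis, and all the geometry is carried by $\Lambda(\cdot)$. Its monotonicity --- the velocity $\dot{\Lambda}(t)$, regarded as a quadratic form on $\Lambda(t)$, is nonnegative --- is precisely the statement $C_{2}(t)=2H(E,E)\geq 0$ of Section~2.

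Second, I would match the combinatorial data. The flag $\DD^{i}_{\gamma(t)}$ of Definition~\ref{d:flag}, hence the growth vector and the Young diagram $\y$, must be identified with the intrinsic flag of the Jacobi curve at time $t$, i.e.\ the filtration $\Lambda(t)\subseteq\Lambda^{(1)}(t)\subseteq\Lambda^{(2)}(t)\subseteq\cdots$ obtained by repeatedly differentiating smooth curves of vectors $\ell(s)\in\Lambda(s)$. Concretely, $\pi_{*}$ intertwines, up to an index shift, the extensions of $\Lambda$ pushed forward by $e^{t\vec{H}}$ with the subspaces $\DD^{i}_{\gamma(t)}$; this is the relation between geodesic flag and Jacobi curve already used in \cite{curvature}. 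Under this dictionary, ``equiregular and ample geodesic'' becomes ``equiregular and ample monotone curve'' in the sense of \cite{lizel}, the increments $d_{i}=\dim\DD^{i}_{\gamma}-\dim\DD^{i-1}_{\gamma}$ become the column lengths of $\y$ (the inequalities $d_{1}\geq\cdots\geq d_{m}$ of Lemma~\ref{l:decreasing} being exactly what makes $\y$ a Young diagram), and rows, levels and superboxes are the same objects on both sides.

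Third, now in the hypotheses of the Zelenko--Li normal frame theorem, I would invoke it to obtain a smooth family of symplectic bases $\{e_{ai}(t),f_{ai}(t)\}_{ai\in\y}$ of $\Sigma$ such that $\{e_{ai}(t)\}$ is a basis of $\Lambda(t)$, the Darboux relations (ii) hold, and the structural equations \eqref{zelframe} hold with a normal matrix $R(t)$ --- the shift $\dot{e}_{ai}=e_{a(i-1)}$ recording how successive derivatives of $\Lambda$ run along each row of $\y$. Setting $E_{ai}(t):=(e^{t\vec{H}})_{*}e_{ai}(t)$ and $F_{ai}(t):=(e^{t\vec{H}})_{*}f_{ai}(t)$ transports this to a moving frame along $\lambda(t)$; since the dot denotes $\mc{L}_{\vec{H}}$ and the flow intertwines the two pictures, the Darboux relations and structural equations are preserved, while $e_{ai}(t)\in\Lambda(t)$ translates into $E_{ai}(t)\in\ver_{\lambda(t)}=\ker\pi_{*}|_{\lambda(t)}$, i.e.\ (i). For uniqueness I would track the residual freedom in the construction: the normalization pins down the frame up to, in each superbox $\alpha$ of size $r$, a constant orthogonal $r\times r$ matrix $O^{\alpha}$ acting simultaneously on the $E$'s and $F$'s and conjugating $R$; equivalently, one checks directly that two frames satisfying (i)--(iii) with normal $R,\wt{R}$ differ by a constant transition matrix which the Darboux relations (ii) force to be orthogonal and the shift equations force to be block-diagonal on superboxes.

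The step I expect to be the main obstacle is the second one: pinning down the precise correspondence between the two notions of Young diagram --- the ``microlocal'' one built from Lie brackets of admissible extensions of $\dot\gamma$, and the intrinsic one of the Jacobi curve built from derivatives of Lagrangian subspaces --- and verifying that equiregularity, ampleness, levels and superboxes match under it. Once this dictionary is secured, both existence and uniqueness are a faithful translation of the Zelenko--Li theorem, and the remaining verifications (that $(e^{t\vec{H}})_{*}$ preserves Darboux relations and structural equations, and that $\ver_{\lambda(t)}=\ker\pi_{*}$) are routine.
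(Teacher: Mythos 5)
Your proposal is correct and follows essentially the same route as the paper, which does not reprove this statement but attributes it directly to Zelenko--Li \cite{lizel}: one passes to the Jacobi curve $\Lambda(t)=(e^{t\vec H})^{-1}_{*}\ver_{\lambda(t)}$ in the Lagrange Grassmannian, matches its intrinsic flag with the geodesic flag (the dictionary worked out in \cite{curvature}), invokes the normal moving frame theorem of \cite{lizel}, and transports the frame back along the flow. The step you flag as the main obstacle is indeed the only nontrivial translation, and it is exactly the content the paper delegates to \cite{curvature} and \cite{lizel}.
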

\begin{remark}\label{rmk:notation}
For $a=1,\dots,k$, the symbol $E_a$ denotes the $n_a$-dimensional column vector
$
E_a = (E_{a1},E_{a2},\dots,E_{an_a})^*,
$
with  analogous notation for $F_a$. Similarly, $E$ denotes the $n$-dimensional column vector
$
E = (E_1,\dots,E_k)^*,
$
and similarly for $F$. Then, we rewrite the system \eqref{zelframe} as follows (compare with~\eqref{eq:Jacobiframe})
\begin{equation}\label{eq:Jacobiframe2}
\begin{pmatrix}
\dot{E} \\
\dot{F}
\end{pmatrix} = 
\begin{pmatrix}
C^*_1 & -C_2 \\
R(t) & -C_1
\end{pmatrix} \begin{pmatrix}
E\\
F
\end{pmatrix},
\end{equation}
where $C_1 = C_1(\y)$, $C_2=C_2(\y)$ are $n\times n$ matrices, depending on the Young diagram $\y$, defined as follows:
 for $a,b = 1,\dots,k$, $i=1,\dots,n_a$, $j=1,\dots,n_b$:
\begin{equation}
[C_1]_{ai,bj} := \delta_{ab}\delta_{i,j-1}, \label{eq:G1},\qquad
[C_2]_{ai,bj} := \delta_{ab}\delta_{i1}\delta_{j1}. 
\end{equation}
It is convenient to see $C_1$ and $C_2$ as block diagonal matrices:
\begin{equation}
C_i(\y) := \begin{pmatrix} 
C_i(\y_1) &  &    \\
 &  \ddots & \\
 &   & C_i(\y_k)
\end{pmatrix}, \qquad i =1,2,
\end{equation}
the $a$-th block being the $n_a\times n_a$ matrices
\begin{equation}\label{eq:Gamma}
C_1(\y_a) := \begin{pmatrix}
0 & \mathbb{I}_{n_a-1} \\
0 & 0
\end{pmatrix} , 
\qquad C_2(\y_a) := \begin{pmatrix}
1 & 0 \\
0 & 0_{n_a-1}
\end{pmatrix},
\end{equation}
where $\mathbb{I}_{m}$ is the $m \times m$ identity matrix and $0_{m}$ is the $m \times m$ zero matrix. Notice that the matrices $C_{1},C_{2}$ satisfy the Kalman rank condition
\begin{equation}\label{eq:Kalman}
\rank\{C_{2},C_{1}C_{2},\ldots,C_{1}^{n-1}C_{2}\}=n.
\end{equation}
Analogously, the matrices $C_{i}(D_{a})$ satisfy \eqref{eq:Kalman} with $n=n_{a}$.
\end{remark}

Let $\{X_{ai}\}_{ai \in \y}$ be the moving frame along $\gamma(t)$ defined by $X_{ai}|_{\gamma(t)}=\pi_{*}F_{ai}|_{\lam(t)}$, for some choice of a canonical Darboux frame. Theorem~\ref{p:can} implies that the following definitions are well posed.
\begin{definition}
The \emph{canonical splitting} of $T_{\gamma(t)} M$ is
\begin{equation}
T_{\gamma(t)}M = \bigoplus_{\alpha}S_{\gamma(t)}^{\alpha}, \qquad S_{\gamma(t)}^{\alpha}:=\spn\{ X_{ai}|_{\gamma(t)}\mid \, ai \in \alpha\},
\end{equation}
where the sum is over the superboxes $\alpha$ of $\y$. Notice that the dimension of $S_{\gamma(t)}^{\alpha}$ is equal to the size $r$ of the level to which the superbox $\alpha$ belongs.
\end{definition}

\begin{definition}\label{d:curv}
The \emph{canonical curvature} (along $\lambda(t)$), is the quadratic form  $\Rcan_{\lam(t)}: T_{\gamma(t)} M \times T_{\gamma(t)} M\to \R$ whose representative matrix, in terms of the basis $\{X_{ai}\}_{ai \in \y}$, is $R_{ai,bj}(t)$. In other words
\begin{equation}\label{eq:SR-dircurv}
\Rcan_{\lam(t)}(v,v) := \sum_{ai,bj\in\y} R_{ai,bj}(t) v_{ai}v_{bj}, \qquad v = \sum_{ai\in\y} v_{ai} X_{ai}|_{\gamma(t)} \in T_{\gamma(t)}M.
\end{equation}
We denote the restrictions of $\mathfrak{R}_{\lambda(t)}$ on the appropriate subspaces by:
\begin{equation}
\mathfrak{R}^{\alpha\beta}_{\lambda(t)} : S^{\alpha}_{\gamma(t)} \times S^{\beta}_{\gamma(t)} \to \R.
\end{equation}
For any superbox $\alpha$ of $D$, the \emph{canonical Ricci curvature} is the partial trace:
\begin{equation}
\mathfrak{Ric}_{\lambda(t)}^\alpha:= \sum_{ai \in \alpha} \Rcan_{\lambda(t)}^{\alpha\alpha}(X_{ai},X_{ai}).
\end{equation}
\end{definition}

The Jacobi equation, written in terms of the components $(p(t),x(t))$ with respect to a canonical Darboux frame $\{E_{ai},F_{ai}\}_{ai \in \y}$, becomes
\begin{equation}
\begin{pmatrix}\label{eq:Jacobicoord2}
\dot{p} \\ \dot{x}
\end{pmatrix} = \begin{pmatrix} - C_1 & -R(t) \\ C_2 & C_1^*
\end{pmatrix} \begin{pmatrix}
p \\ x
\end{pmatrix}.
\end{equation}
This is the sub-Riemannian generalization of the classical Jacobi equation seen as first-order equation for fields on the cotangent bundle. Its  structure depends on the Young diagram of the geodesic through the matrices $C_{i}(\y)$, while the remaining  invariants are contained in the curvature matrix $R(t)$. Notice that this includes the Riemannian case, where $\y$ is the same for every geodesic, with $C_{1}=0$ and $C_{2}=\mathbb{I}$.


\subsection{Homogeneity properties}

For all $c>0$, let $H_c := H^{-1}(c/2)$ be the Hamiltonian level set. In particular $H_1$ is the unit cotangent bundle: the set of initial covectors associated with unit-speed geodesics. Since the Hamiltonian function is fiber-wise quadratic, we have the following property for any $c>0$
\begin{equation}\label{eq:commutation}
e^{t \vec{H}}(c \lambda) = c e^{c t\vec{H}}(\lambda),
\end{equation}
where, for $\lambda \in T^*M$, the notation $c \lambda$ denotes the fiber-wise multiplication by $c$. Let $P_c : T^*M \to T^*M$ be the map $P_c(\lambda) = c\lambda$. Indeed $\alpha \mapsto P_{e^\alpha}$ is a one-parameter group of diffeomorphisms. Its generator is the \emph{Euler vector field} $\mathfrak{e} \in \Gamma(\ver)$, and is characterized by $P_c  = e^{(\ln c)\mathfrak{e}}$.
We can rewrite~\eqref{eq:commutation} as the following commutation rule for the flows of $\vec{H}$ and $\mathfrak{e}$:
\begin{equation}
e^{t\vec{H}} \circ P_c = P_c \circ e^{ c t \vec{H}}.
\end{equation}
Observe that $P_c$ maps $H_1$ diffeomorphically on $H_{c}$. Let $\lambda \in H_1$ be associated with an ample, equiregular geodesic with Young diagram $\y$. Clearly also the geodesic associated with $\lambda^c:=c \lambda \in H_{c}$ is ample and equiregular, with the same Young diagram. This corresponds  to a reparametrization of the same curve: in fact $\lambda^c(t) =e^{t\vec{H}}(c\lam)= c(\lambda(ct))$, hence $\gamma^c(t) = \pi(\lambda^c(t))= \gamma(ct)$.

\begin{theorem}[Homogeneity properties of the canonical curvature]\label{t:homogR}
For any superbox $\alpha \in \y$, let $|\alpha|$ denote the column index of $\alpha$. Denoting $\lam^{c}(t)=e^{t\vec{H}}(c\lam)$ we have, for any $c>0$
\begin{equation}
\mathfrak{R}^{\alpha\beta}_{\lambda^{c}(t)} = c^{|\alpha|+ |\beta|} \mathfrak{R}^{\alpha\beta}_{\lambda(ct)},
\end{equation}
\end{theorem}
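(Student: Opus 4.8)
The plan is to derive the homogeneity of $\Rcan$ from the homogeneity \eqref{eq:commutation} of the Hamiltonian flow, by transporting a canonical Darboux frame along $\lambda(t)$ to one along $\lambda^{c}(t)$ via the fiberwise dilation $P_{c}$ together with the time rescaling $t\mapsto ct$, and then reading off how the curvature matrix transforms. The two properties of $P_{c}$ that make this work are that it preserves the fibers of $\pi$ (so $\pi\circ P_{c}=\pi$ and $(P_{c})_{*}\ver=\ver$) and that it rescales the symplectic form, $P_{c}^{*}\sigma=c\,\sigma$, which follows from $P_{c}^{*}s=c\,s$ for the tautological one-form $s$.

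The computational heart of the argument is the following identity: if $V(t)$ is a vector field along $\lambda(t)$, then $W(t):=(P_{c})_{*}V(ct)$ is a vector field along $\lambda^{c}(t)=P_{c}(\lambda(ct))$, and
\[
\dot W(t)=c\,(P_{c})_{*}\dot V(ct).
\]
Indeed, the commutation rule $e^{t\vec H}\circ P_{c}=P_{c}\circ e^{ct\vec H}$ gives $(e^{-\eps\vec H})_{*}(P_{c})_{*}=(P_{c})_{*}(e^{-c\eps\vec H})_{*}$, and inserting this into the definition of $\dot W$ and changing variables $u=c\eps$ in the $\eps$-derivative yields the formula. In particular, a Jacobi field along $\lambda(t)$ is sent to a Jacobi field along $\lambda^{c}(t)$.

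Now fix a canonical Darboux frame $\{E_{ai}(t),F_{ai}(t)\}_{ai\in\y}$ along $\lambda(t)$ with curvature matrix $R(t)$ and set, for $ai\in\y$,
\[
\mc E_{ai}(t):=c^{-i}\,(P_{c})_{*}E_{ai}(ct),\qquad \mc F_{ai}(t):=c^{\,i-1}\,(P_{c})_{*}F_{ai}(ct).
\]
These powers of $c$ are exactly the ones forced by requiring the new frame to satisfy (i)--(iii) of Theorem~\ref{p:can}: (i) is automatic from $\pi\circ P_{c}=\pi$; the Darboux relations (ii), e.g.\ $\sigma(\mc E_{ai},\mc F_{bj})=c^{-i+(j-1)+1}\sigma(E_{ai},F_{bj})=\delta_{ab}\delta_{ij}$, hold because of $P_{c}^{*}\sigma=c\,\sigma$; and applying the key identity to each equation of \eqref{zelframe} shows that all shift coefficients come out equal to $1$, while
\[
\dot{\mc F}_{ai}(t)=\sum_{bj\in\y}\wt R_{ai,bj}(t)\,\mc E_{bj}(t)-\mc F_{a(i+1)}(t),\qquad \wt R_{ai,bj}(t):=c^{\,i+j}R_{ai,bj}(ct),
\]
with the obvious modification when $i=n_{a}$. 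The matrix $\wt R(t)$ is symmetric since $c^{i+j}=c^{j+i}$, and it is normal because the normality conditions of \cite{lizel} are linear relations involving only entries $R_{ai,bj}$ with a fixed pair of column indices $(i,j)$, hence are preserved when each such block is multiplied by the constant $c^{i+j}$.

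Thus $\{\mc E_{ai},\mc F_{ai}\}$ is a canonical frame along $\lambda^{c}(t)$, and by the uniqueness statement in Theorem~\ref{p:can} (any two such frames differ by a constant superbox-orthogonal transformation, under which $R^{\alpha\beta}$ becomes $O^{\alpha}R^{\alpha\beta}(O^{\beta})^{*}$) the forms $\Rcan^{\alpha\beta}_{\lambda^{c}(t)}$ may be computed from it. Its associated moving frame along $\gamma^{c}(t)=\gamma(ct)$ is $\mc X_{ai}:=\pi_{*}\mc F_{ai}|_{\lambda^{c}(t)}=c^{\,i-1}X_{ai}|_{\gamma(ct)}$, so in this frame the canonical curvature of $\gamma^{c}$ has entries $\wt R_{ai,bj}(t)=c^{\,i+j}R_{ai,bj}(ct)$; since $i=|\alpha|$ for every box of a superbox $\alpha$, this is precisely $\Rcan^{\alpha\beta}_{\lambda^{c}(t)}=c^{|\alpha|+|\beta|}\Rcan^{\alpha\beta}_{\lambda(ct)}$. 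The only step that is not pure bookkeeping is the invariance of normality under $R_{ai,bj}\mapsto c^{\,i+j}R_{ai,bj}$; to make it rigorous one recalls the explicit normality conditions from \cite{lizel} (see Appendix~\ref{s:appendixnormal}) and checks that each is homogeneous with respect to the column grading of $\y$.
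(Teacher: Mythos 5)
Your proposal is correct and follows essentially the same route as the paper: the paper proves exactly this rescaled-frame statement as Proposition~\ref{p:framescaling} (with the same powers $c^{-i}$ and $c^{i-1}$, the same use of $P_c^*\sigma = c\sigma$, and the same key identity $\dot W(t) = c\,(P_c)_*\dot V(ct)$ derived from the commutation rule), and then deduces Theorem~\ref{t:homogR} from Definition~\ref{d:curv}. Your explicit check that normality of $R$ is preserved under $R_{ai,bj}\mapsto c^{i+j}R_{ai,bj}$ is a detail the paper leaves implicit, and it is verified correctly.
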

\begin{remark}
In the Riemannian setting, $\y$ has only one superbox with $|\al|=1$  (see Fig.~\ref{f:Yd2}). Then $\mathfrak{R}_{\lambda}:=\mathfrak{R}^{\alpha\alpha}_{\lambda(0)}$ is homogeneous of degree $2$ as a function of $\lam$. 
\end{remark}
Theorem \ref{t:homogR} follows  directly from the next result and Definition \ref{d:curv}. In the next proposition, for any $\eta \in T^*M$ and $c >0$, we denote with $d_{\eta} P_c : T_{\eta}(T^*M) \to T_{c\eta}(T^*M)$ the differential of the map $P_c$, computed at $\eta$.

\begin{proposition}\label{p:framescaling}
Let $\lambda \in H_1$ and $\{E_{ai},F_{ai}\}_{ai \in \y}$ be the associated canonical frame along the extremal $\lambda(t)$. Let $c>0$ and define, for $ai \in \y$
\begin{equation}
E^c_{ai}(t):=\frac{1}{c^i}(d_{\lambda(ct)} P_{c}) E_{ai}(ct), \qquad F_{ai}^c(t):=c^{i-1}(d_{\lambda(ct)} P_c) F_{ai}(ct).
\end{equation}
The moving frame $\{E^c_{ai}(t),F^c_{ai}(t)\}_{ai \in \y} \in T_{\lambda^c(t)}(T^*M)$ is a canonical frame associated with the initial covector $\lambda^c =c\lambda\in H_{c}$, with curvature matrix
\begin{equation}\label{eq:homog}
R^{\lambda^{c}}_{ai,bj}(t) = c^{i+j} R^\lambda_{ai,bj}(ct).
\end{equation}
\end{proposition}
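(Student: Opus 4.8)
The plan is to verify directly that the rescaled frame $\{E^c_{ai}, F^c_{ai}\}$ satisfies all the defining properties (i)--(iii) of Theorem~\ref{p:can} for the extremal $\lambda^c(t)$, and then to read off the curvature matrix from the structural equations. The crucial input is the commutation rule $e^{t\vec H}\circ P_c = P_c \circ e^{ct\vec H}$, equivalently $\lambda^c(t) = P_c(\lambda(ct))$, which tells us that $P_c$ intertwines the two Hamiltonian flows up to the time reparametrization $t\mapsto ct$. A second ingredient is that $P_c$ is a fiber-wise dilation, hence acts on $T^*M$ in a way compatible with the symplectic form only up to a scalar: since $P_c^*\sigma = c\,\sigma$ (the tautological one-form scales by $c$, so does its differential), the differential $d_\eta P_c$ multiplies $\sigma$ by $c$. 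This is exactly why the asymmetric powers $1/c^i$ and $c^{i-1}$ appear — they are chosen so that the product $\sigma(E^c_{ai},F^c_{bj})$ picks up $c^{-i}\cdot c^{j-1}\cdot c = c^{j-i}$, which equals $1$ precisely on the diagonal $ai=bj$, while $\sigma(E^c,E^c)$ and $\sigma(F^c,F^c)$ remain zero. So the first two steps are: (1) check property (i), which is immediate since $P_c$ preserves fibers of $\pi$, hence $dP_c$ maps $\ver$ to $\ver$; (2) check property (ii), the Darboux condition, using $P_c^*\sigma = c\sigma$ and the chosen exponents.

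The heart of the matter is step (3): computing $\dot E^c_{ai}$ and $\dot F^c_{ai}$, where the dot is the Lie derivative along $\vec H$ at the point $\lambda^c(t)$. Here I would use the intertwining relation carefully. If $V(t)$ is a vector field along $\lambda(t)$ with $\dot V(s) = W(s)$ in the sense of the $\vec H$-Lie derivative along $\lambda$, then the field $t\mapsto (d_{\lambda(ct)}P_c)V(ct)$ along $\lambda^c(t)$ has $\vec H$-Lie derivative equal to $c\,(d_{\lambda(ct)}P_c)W(ct)$: one factor of $c$ comes from the chain rule on $V(ct)$, and because $P_c$ conjugates $e^{s\vec H}$ with $e^{cs\vec H}$, pushing forward by $dP_c$ exchanges the two flows, so the Lie derivative computed with respect to $\vec H$ downstairs matches the one computed with respect to $\vec H$ upstairs after the rescaling. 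Applying this to each line of \eqref{zelframe}: for the relation $\dot E_{ai}=E_{a(i-1)}$ we get $\dot E^c_{ai} = \frac{1}{c^i}\cdot c\cdot (d P_c)E_{a(i-1)}(ct) = \frac{1}{c^{i-1}}(dP_c)E_{a(i-1)}(ct) = E^c_{a(i-1)}$, exactly as required; for $\dot E_{a1}=-F_{a1}$ we get $\dot E^c_{a1} = \frac{1}{c}\cdot(-1)\cdot c\,(dP_c)F_{a1}(ct) = -(dP_c)F_{a1}(ct) = -F^c_{a1}$, again matching since $c^{1-1}=1$; and for the $F$-equations, the line $\dot F_{ai}=\sum R_{ai,bj}E_{bj} - F_{a(i+1)}$ becomes, after multiplying by $c^{i-1}$ and by the chain-rule factor $c$,
\begin{equation}
\dot F^c_{ai}(t) = c^i (d P_c)\Big(\sum_{bj} R^\lambda_{ai,bj}(ct) E_{bj}(ct)\Big) - c^i (dP_c)F_{a(i+1)}(ct) = \sum_{bj} c^i R^\lambda_{ai,bj}(ct)\, c^j E^c_{bj}(t) - F^c_{a(i+1)}(t),
\end{equation}
using $E_{bj}(ct) = c^j (dP_c)^{-1}\,\text{pulled back}$, i.e. $c^i(dP_c)E_{bj}(ct) = c^{i+j}E^c_{bj}(t)$, and $c^i (dP_c)F_{a(i+1)}(ct) = c^{(i+1)-1}(dP_c)F_{a(i+1)}(ct) = F^c_{a(i+1)}(t)$. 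The same computation with $i=n_a$ kills the $F_{a(i+1)}$ term. Hence $\{E^c,F^c\}$ satisfies \eqref{zelframe} with curvature matrix $R^{\lambda^c}_{ai,bj}(t) = c^{i+j}R^\lambda_{ai,bj}(ct)$.

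Finally, I would observe that the symmetry and normality of $R^{\lambda^c}(t)$ follow automatically: symmetry is preserved since $c^{i+j}=c^{j+i}$, and normality is a condition on the block structure relative to the Young diagram, which is the same for $\lambda$ and $\lambda^c$ and is clearly stable under conjugation by the diagonal rescaling $E_{ai}\mapsto c^{-i}E_{ai}$ — so the matrix stays normal in the sense of \cite{lizel}. This shows $\{E^c_{ai},F^c_{ai}\}$ is a \emph{bona fide} canonical frame for $\lambda^c$, and \eqref{eq:homog} is established. I expect the main obstacle to be purely bookkeeping: getting the factors of $c$ from the chain rule, from $P_c^*\sigma = c\sigma$, and from the definitions of $E^c,F^c$ to line up consistently across all four families of structural equations — in particular being careful that the exponent shift in $F^c_{ai} = c^{i-1}(dP_c)F_{ai}$ (note $i-1$, not $i$) is exactly what makes the $-F_{a(i+1)}$ terms transform correctly while the Darboux pairing stays normalized. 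Theorem~\ref{t:homogR} then follows by reading Definition~\ref{d:curv}: the matrix entry $R^{\lambda^c}_{ai,bj}(t)$ scales by $c^{|\alpha|+|\beta|}$ when $ai\in\alpha$, $bj\in\beta$, since $i=|\alpha|$ and $j=|\beta|$ are the column indices.
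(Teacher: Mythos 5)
Your proposal is correct and follows essentially the same route as the paper: property (i) from fiber-preservation of $P_c$, property (ii) from $P_c^*\sigma = c\sigma$ together with the chosen exponents, and property (iii) from the intertwining identity showing that the $\vec H$-Lie derivative of $(d_{\lambda(ct)}P_c)\xi(ct)$ equals $c\,(d_{\lambda(ct)}P_c)\dot\xi(ct)$, applied line by line to \eqref{zelframe}. Your additional remark that normality of $R^{\lambda^c}$ is preserved under the diagonal rescaling is a correct point that the paper leaves implicit.
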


\begin{proof}
We check all the relations of Theorem~\ref{p:can}. Indeed $P_\alpha$ sends fibers to fibers, hence (i) is trivially satisfied. For what concerns (ii), let $\theta$ be the Liouville one-form, and $\sigma =d\theta$. Indeed $P_c^* \theta = c\theta$. Hence $P_c^*\sigma = c\sigma$. It follows that $\{E^c_{ai}(t),F^c_{ai}(t)\}_{ai \in \y}$ is a Darboux frame at $\lambda^c(t)$:
\begin{equation}
\sigma_{\lambda^c(t)}(E^c_{ai}(t),F^c_{bj}(t)) = \tfrac{1}{c} (P_c^* \sigma)_{\lambda(t)}(E_{ai}(t),F_{bj}(t)) = \delta_{ab}\delta_{ij},
\end{equation}
and similarly for the others Darboux relations.

For what concerns (iii) (the structural equations), let $\xi(t)$ be any vector field along $\lambda(t)$, and $(d_{\lambda(t)} P_c) \xi(ct)$ be the corresponding vector field along $\lambda^c(t)$. Then
\begin{align*}
\left.\frac{d}{d\eps}\right|_{\eps =0} e^{-\eps \vec{H}}_* \circ (d_{\lambda(t)} P_c) \xi (c(t+\eps)) & =\left.\frac{d}{d\eps}\right|_{\eps =0} (e^{-\eps \vec{H}} \circ P_c)_* \xi(c(t+\eps)) \\
& =\left.\frac{d}{d\eps}\right|_{\eps =0} (P_c \circ e^{-c\eps \vec{H}})_* \xi(c(t+\eps)) \\
& =\left.c\frac{d}{d\tau}\right|_{\tau =0} (P_c \circ e^{-\tau \vec{H}})_* \xi(ct+\tau) \\
& = c (d_{\lambda(ct)}P_c) \dot\xi(ct).
\end{align*}
Applying the above identity to compute the derivatives of the new frame, and using~\eqref{zelframe}, one finds that $\{E^c_{ai}(t),F^c_{ai}(t)\}_{ai \in \y}$ satisfies the structural equations, with curvature matrix given by~\eqref{eq:homog}. For example
\begin{align*}
\dot{F}_{ai}^c(t) & = c^{i-1} c (d_{\lambda(ct)} P_c) \dot{F}_{ai}(ct) \\
& = c^i (d_{\lambda(ct)}P_c) [R^{\lam}_{ai,bj}(ct) E_{bj}(ct) - F_{a(i+1)}(ct)] \\
& = c^i [c^j R^{\lam}_{ai,bj}(ct) E_{bj}^c(t) - c^{-i} F^c_{a(i+1)}(t)] \\
& = c^{i+j}R^{\lam}_{ai,bj}(ct) E_{bj}^c(t) - F^c_{a(i+1)}(t),
\end{align*}
where we suppressed a summation over $bj \in \y$.
\end{proof}

 Proposition \ref{p:can} defines not only a curvature, but also a (non-linear) connection, in the sense of Ehresmann, that we now introduce.

\section{Ehresmann curvature and curvature operator}
For any smooth vector bundle $N$ over $M$, let $\Gamma(N)$ denote the smooth sections of $N$. 
Recall that $\ver :=\ker \pi_* \subset T(T^*M)$ is the \emph{vertical distribution}. An \emph{Ehresmann connection} on $T^{*}M$ is a smooth distribution $\hor \subset T (T^*M)$   such that
\begin{equation}
T(T^*M) = \hor \oplus \ver.
\end{equation}
We call $\hor$ the \emph{horizontal distribution}\footnote{ Note that this is a distribution on $T^{*}M$, i.e.~a sub-bundle of $T(T^*M)$ and should not be confused with the sub-Riemannian distribution $\distr$, that is a subbundle of $TM$.}. An Ehresmann connection $\hor$ is \emph{linear} if $\hor_{c\lam}=(d_{\lam}P_{c} ) \hor_{\lam}$ for every $\lam\in T^{*}M$ and $c>0$.

For any $X \in \Gamma(TM)$ there exists a unique \emph{horizontal lift} $\nabla_X $ in $ \Gamma(\hor)$ such that $\pi_* \nabla_X = X$.  
\begin{remark}
A function $h \in C^\infty(T^*M)$ is fiber-wise linear if it can be written as $h(\lam)=\la \lam,Y\ra$, for some $Y\in \Gamma(TM)$. Such an $Y$ is clearly unique, and for this reason we denote $h_Y := \lam \mapsto \la \lam,Y\ra$ the fiber-wise linear function associated with $Y \in \Gamma(TM)$.
 A connection $\nabla$ is linear if, for every $X\in \Gamma(TM)$, the derivation $\nabla_{X}$ maps  fiber-wise linear functions to fiber-wise linear functions. In this case, we recover the classical notion of covariant derivative by defining $\nabla_{X}Y=Z$ if $\nabla_{X}h_{Y}=h_{Z}$, where $Y,Z\in \Gamma(TM)$.
\end{remark}

 We recall the definition of curvature of an Ehresmann connection \cite{KNFDG}.
\begin{definition}
The \emph{ Ehresmann curvature} of the connection $\nabla$ is the $C^\infty(M)$-linear map $R^{\nabla}: \Gamma(TM) \times \Gamma(TM) \to \Gamma(\ver)$ defined by
\begin{equation}
R^{\nabla}(X,Y) = [\nabla_X,\nabla_Y] - \nabla_{[X,Y]}, \qquad X,Y \in \Gamma(TM).
\end{equation}
\end{definition}
$R^{\nabla}$ is skew-symmetric, namely $R^{\nabla}(X,Y) = -R^{\nabla}(Y,X)$. Notice that $R^{\nabla}=0$ if and only if $\hor$ is involutive. 
\subsection{Canonical connection}
Let $\g(t)$ be a fixed ample and equiregular geodesic with Young diagram $\y$, projection of the extremal $\lam(t)$, with initial covector $\lam$. Let $\{E_{ai}(t),F_{ai}(t)\}$ be a canonical frame along $\lambda(t)$. For $t=0$, this defines a subspace at $\lam\in T^*M$, namely
\begin{equation}\label{eq:cancon}
\hor_\lambda := \spn\{F_{ai}|_{\lambda}\}_{ai \in \y}, \qquad \lambda \in T^*M .
\end{equation}
Indeed this definition makes sense on the subset of covectors $N\subset T^{*}M$ associated with ample and equiregular geodesics. In the Riemannian case, every non-trivial geodesic is ample and equiregular, with the same Young diagram. Hence $N=T^{*}M\setminus H^{-1}(0)$. A posteriori one can show that this connection is linear and  can be extended smoothly  on the whole $T^*M$. In the sub-Riemannian case, $N\subset T^{*}M\setminus H^{-1}(0)$.

In general, using the results of \cite[Section 5.2]{curvature} and \cite[Section 5]{lizel}, one can prove that $N$ is open and dense in $T^{*}M$. Moreover, the elements of the frame depend rationally (in charts) on the point $\lambda$, hence $\hor$ is smooth on $N$. 

For simplicity, we assume that it is possible to extend $\hor$ to a smooth  distribution on the whole $T^*M$. This is indeed possible in some cases of interest: on corank $1$ structures with symmetries \cite{lizel2} and on contact sub-Riemannian structures \cite{nostrocontact} (see also \cite{RS-3-Sasakian} for fat structures). In the general case, we replace $T^*M$ with $N$. 

\begin{definition} The \emph{canonical Ehresmann connection} associated with the sub-Riemannian structure is the horizontal distribution $\hor \subset T (T^{*}M)$ defined by \eqref{eq:cancon}.
\end{definition}

As a consequence of Proposition \ref{p:framescaling}, $\hor$ is non-linear, in general.  However, if the structure is Riemannian, one has $\hor_{c\lam}=(d_{\lam}P_{c} ) \hor_{\lam}$
and the connection is linear.

\begin{proposition}Let $H$ be the sub-Riemannian Hamitonian and $\hor$ the canonical connection. Then $\nabla_{X} H=0$ for every $X\in\Gamma(TM)$. Equivalently, $\vec{H}\in \hor$.
\end{proposition}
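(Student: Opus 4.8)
The plan is to unravel what $\nabla_X H = 0$ means in terms of the canonical frame, and then read it off from the structural equations of Theorem~\ref{p:can}. Recall that $H$ is a function on $T^*M$, so for $X \in \Gamma(TM)$ the quantity $\nabla_X H$ is the derivative of $H$ along the horizontal lift $\nabla_X \in \Gamma(\hor)$. Since $\hor_\lambda = \spn\{F_{ai}|_\lambda\}$, it suffices to show that $F_{ai} H = 0$ along every extremal $\lambda(t)$ projecting to an ample, equiregular geodesic, i.e.\ that $H$ is constant in all the horizontal directions $F_{ai}$. Equivalently, by the definition of $\hor$ and the fact that $\vec H$ is always tangent to the level sets $H_c$, one wants to show $\vec H|_{\lambda(t)} \in \hor_{\lambda(t)}$, which (given $T(T^*M) = \hor \oplus \ver$ and $\vec H \notin \ver$ away from $H^{-1}(0)$) is the same statement.

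The key observation is that $\dot\lambda(t) = \vec H|_{\lambda(t)}$, so $\vec H$ evaluated along the extremal \emph{is} the velocity of the curve $\lambda(t)$ in $T^*M$. Now expand this velocity in the canonical frame: writing $\lambda(t)$ in Darboux coordinates and using that a canonical frame is a moving frame along $\lambda(t)$, one has $\vec H|_{\lambda(t)} = \sum_{ai} \big( e_{ai}(t) E_{ai}(t) + f_{ai}(t) F_{ai}(t)\big)$ for suitable coefficient functions. The claim $\vec H \in \hor$ is exactly the statement that all the $E$-components vanish, i.e.\ $e_{ai}(t) \equiv 0$. To see this, note that the vertical component of $\vec H$ with respect to the splitting is $\pi_*$-related to $\pi_* \vec H|_{\lambda(t)} = \dot\gamma(t)$, which is \emph{not} zero — so this naive count is not the whole story; rather, the correct route is: $\pi_* \vec H|_{\lambda(t)} = \dot\gamma(t) = \pi_* F_{a1}|_{\lambda(t)}$ summed appropriately, since by construction $X_{ai} = \pi_* F_{ai}$ and the tangent vector $\dot\gamma(t)$ lies in $\mathcal F^1_{\gamma(t)} = \distr_{\gamma(t)}$, which is spanned by the $X_{a1}$ (the first-column boxes). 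So $\vec H - (\text{a combination of the } F_{a1})$ is vertical, hence a combination of the $E_{ai}$; then one must show this vertical remainder is actually zero.

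To pin down the remainder I would differentiate. Since $\vec H|_{\lambda(t)} = \dot\lambda(t)$, the equation $\dot{\mathcal J}=0$ for Jacobi fields and the structural equations \eqref{zelframe} control how $\vec H$ decomposes: concretely, $\vec H$ is itself a (degenerate) Jacobi field up to the vertical Euler-type corrections, and more directly, differentiating the identity $\vec H|_{\lambda(t)} = \sum e_{ai} E_{ai} + \sum f_{ai} F_{ai}$ along $\vec H$ and using $\dot{\vec H} = 0$ (since $\vec H$ is $\vec H$-invariant: its Lie derivative along itself vanishes) together with \eqref{zelframe} gives a linear ODE system for the coefficients $(e_{ai}(t), f_{ai}(t))$; comparing with the Jacobi system \eqref{eq:Jacobicoord2} and using that the homogeneity of Proposition~\ref{p:framescaling} forces $\vec H$ (which scales as $e^{t\vec H}(c\lambda) = c\,e^{ct\vec H}(\lambda)$, i.e.\ with a definite weight) to live in the subspace of the frame with the matching weight — namely the $F$-span — one concludes $e_{ai} \equiv 0$. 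Alternatively, and more cleanly, one uses that $\vec H = \sum_a f_a F_{a1}$-type expression is forced by matching both the projection ($\pi_*\vec H = \dot\gamma \in \distr$, spanned by the $\pi_* F_{a1}$) and the homogeneity weight under $P_c$: the Euler field $\mathfrak e$ spans the ``lowest weight'' vertical line, and $\vec H$ has the complementary weight placing it in $\hor$.

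\textbf{Main obstacle.} The delicate point is showing the vertical remainder of $\vec H$ vanishes — establishing not just $\pi_*\vec H \in \distr$ but that $\vec H$ has \emph{no} $E_{ai}$-component at all. I expect the clean argument to go through the homogeneity/weight bookkeeping of Proposition~\ref{p:framescaling}: the identity $e^{t\vec H}\circ P_c = P_c \circ e^{ct\vec H}$ means $\vec H$ transforms under $P_c$ with a weight opposite to that of the Euler field $\mathfrak e$, and since the $E_{ai}$ scale like $c^{-i}$ and the $F_{ai}$ like $c^{i-1}$ (from the proof of Proposition~\ref{p:framescaling}) while $\vec H$ picks up no net power inconsistent with the $F$-block, $\vec H$ must lie in $\spn\{F_{ai}\} = \hor$. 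Making this weight argument airtight, rather than hand-wavy, is the crux; the rest is the definitional chain $\vec H \in \hor \iff \nabla_X H = 0 \ \forall X$, which is immediate since $\{F_{ai}\}$ spans $\hor$ and $\vec H H = \{H,H\} = 0$ shows $H$ is already constant along $\vec H$, so constancy along all of $\hor$ follows once $\vec H$ generates $\hor$ together with... — more precisely, one shows directly $F_{ai} H = 0$ by noting $F_{ai} H = \sigma(\,\cdot\,, \vec H)(F_{ai}) = \sigma(F_{ai}, \vec H)$ and, writing $\vec H$ in the $F$-span, this is a sum of $\sigma(F_{ai}, F_{bj}) = 0$ by the Darboux relations (ii). That last line is in fact the slickest finish and sidesteps solving any ODE.
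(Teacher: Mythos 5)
Your reduction of the statement to $\vec H\in\hor$ is correct and matches the paper: $\nabla_X H = dH(\nabla_X)=\sigma(\vec H,\nabla_X)$, and since $\hor=\spn\{F_{ai}\}$ is Lagrangian, $\vec H\in\hor$ kills all these pairings. But the core of the proof is precisely the claim you defer — that the vertical ($E$-)component of $\vec H$ vanishes — and neither of your proposed routes establishes it. The homogeneity argument does not close: from $e^{t\vec H}\circ P_c=P_c\circ e^{ct\vec H}$ one gets $\vec H|_{c\lambda}=c\,(d_\lambda P_c)\vec H|_\lambda$, and combined with the scalings $E^c_{ai}=c^{-i}(dP_c)E_{ai}$, $F^c_{ai}=c^{i-1}(dP_c)F_{ai}$ this only yields a relation between the coefficients of $\vec H$ at $\lambda$ and at $c\lambda$ (namely $e^c_{ai}=c^{i+1}e_{ai}$, $f^c_{ai}=c^{2-i}f_{ai}$); nothing forces $e_{ai}=0$. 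Likewise, the observation that $\pi_*\vec H=\dot\gamma$ lies in $\distr$ (itself relying on the unproved-here identification $\distr_{\gamma(t)}=\spn\{\pi_*F_{a1}\}$) only pins down the horizontal part modulo a vertical remainder, which is exactly what must be shown to vanish.

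The paper's actual argument is substantially different and is where all the work lies. One first shows that the Euler field satisfies $\dot{\mathfrak e}=-\vec H$ (from the commutation of the flows of $\vec H$ and $\mathfrak e$), hence $\ddot{\mathfrak e}=0$. Writing the vertical field $\mathfrak e=v(t)^*E(t)$ in the canonical frame, the structural equations turn $\ddot{\mathfrak e}=0$ into a coupled system for $v$; a block-by-block analysis using the explicit form of $C_1(\y_a)$, $C_2(\y_a)$, the \emph{normal} conditions on $R$ (the vanishing conditions of Table~\ref{t:normaltable} are used to discard the curvature terms $[C_2C_1^iRC_2v]_a$), and finally the Kalman rank condition~\eqref{eq:Kalman} shows that $v_a=0$ for $n_a>1$ and $v_a$ is constant for $n_a=1$. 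Then $\vec H=-\dot{\mathfrak e}=\sum_{n_a=1}v_{a1}F_{a1}\in\hor$ by the structural equation $\dot E_{a1}=-F_{a1}$. None of this machinery (the Euler field as the key auxiliary object, the normality of $R$, the Kalman condition) appears in your sketch, so the proposal has a genuine gap at its crux.
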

\begin{remark} The above condition is the compatibility of the canonical connection with the sub-Riemannian metric. In the Riemannian setting,  $\hor$ is linear and this condition can be rewritten, in the sense of covariant derivative, as $\nabla g=0$.
\end{remark}
\begin{proof} The equivalence of the two statements follows from the definition of Hamiltonian vector field and the fact that  $\hor$ is Lagrangian, by construction. Indeed 
\begin{equation}
\nabla_{X}H=dH(\nabla_{X})=\sigma(\vec{H},\nabla_{X}).
\end{equation}
Then we prove that $\vec H\in \hor$.
\begin{lemma}\label{l:dere}
Let $\mathfrak{e}$ be the Euler vector field. Then $\dot{\mathfrak{e}} = -\vec{H}$.
\end{lemma}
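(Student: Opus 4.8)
The plan is to unwind the definition of the dot operation and exploit the homogeneity relation~\eqref{eq:commutation} together with the characterization of $\mathfrak{e}$ as the generator of the dilations $P_c$. Recall that for a vector field $V(t)$ along $\lambda(t)$ one has $\dot{V}(t) = \frac{d}{d\eps}\big|_{\eps=0} e^{-\eps\vec{H}}_* V(t+\eps)$. Since $\mathfrak{e}$ is a globally defined vector field on $T^*M$, its value along the extremal is just $\mathfrak{e}(\lambda(t))$, and the dot derivative reduces to the Lie bracket: $\dot{\mathfrak{e}} = \frac{d}{d\eps}\big|_{\eps=0} e^{-\eps\vec{H}}_* \mathfrak{e}(\lambda(\eps)) = \mathcal{L}_{\vec{H}}\mathfrak{e} = [\vec{H},\mathfrak{e}]$, evaluated along $\lambda(t)$. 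So it suffices to show $[\vec{H},\mathfrak{e}] = -\vec{H}$ as vector fields on $T^*M$ (at least on $H^{-1}(0)^c$, or everywhere by continuity).

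The key computational input is the commutation rule $e^{t\vec{H}} \circ P_c = P_c \circ e^{ct\vec{H}}$, equivalently $P_c^{-1} \circ e^{t\vec{H}} \circ P_c = e^{ct\vec{H}}$. Differentiating in $t$ at $t=0$ gives $(P_c)^{-1}_* \vec{H} = c\,\vec{H}$, i.e.\ $(P_c)_* \vec{H} = \tfrac{1}{c}\vec{H}$, so $\vec{H}$ is homogeneous of degree $-1$ under the dilations. Now write $c = e^\alpha$ so that $P_{e^\alpha} = e^{\alpha\mathfrak{e}}$, and differentiate the relation $(e^{\alpha\mathfrak{e}})_* \vec{H} = e^{-\alpha}\vec{H}$ in $\alpha$ at $\alpha = 0$: the left-hand side yields $-[\mathfrak{e},\vec{H}] = [\vec{H},\mathfrak{e}]$ (by the standard formula $\frac{d}{d\alpha}\big|_0 (e^{\alpha\mathfrak{e}})_* V = -[\mathfrak{e},V]$), while the right-hand side yields $-\vec{H}$. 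Hence $[\vec{H},\mathfrak{e}] = -\vec{H}$, and restricting to $\lambda(t)$ gives $\dot{\mathfrak{e}} = -\vec{H}$ as claimed.

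Alternatively, and perhaps more transparently, one can argue in Darboux coordinates $(p,x)$ on $T^*M$, where $\mathfrak{e} = \sum_i p_i \partial_{p_i}$ (the generator of fiber dilations) and $\vec{H} = \sum_i (\partial_{p_i}H\, \partial_{x_i} - \partial_{x_i}H\, \partial_{p_i})$. Since $H$ is fiber-wise quadratic, $\partial_{p_i}H$ is fiber-wise linear (degree $1$ in $p$) and $\partial_{x_i}H$ is fiber-wise quadratic (degree $2$ in $p$); a direct bracket computation $[\vec{H},\mathfrak{e}]$ then collects these homogeneity weights and returns $-\vec{H}$. The only mild subtlety — the main point to be careful about — is the bookkeeping of which dot/Lie-derivative convention is in force and the sign in $\frac{d}{d\alpha}\big|_0 (e^{\alpha\mathfrak{e}})_* V = -[\mathfrak{e},V]$; once that is fixed, the statement is immediate. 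I would present the first (coordinate-free) argument as the main proof, since it ties directly to the homogeneity machinery already set up in the paper around~\eqref{eq:commutation}.
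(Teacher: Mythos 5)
Your proof is correct and takes essentially the same route as the paper: both reduce $\dot{\mathfrak{e}}$ to the Lie bracket $[\vec{H},\mathfrak{e}]$ and compute it by differentiating the commutation rule $e^{t\vec{H}}\circ P_c = P_c\circ e^{ct\vec{H}}$ between the Hamiltonian flow and the fiber dilations $P_c=e^{(\ln c)\mathfrak{e}}$. (Note that the paper's proof states the intermediate conclusion as $[\vec{H},\mathfrak{e}]=-\mathfrak{e}$, which is evidently a typo for $[\vec{H},\mathfrak{e}]=-\vec{H}$; your sign bookkeeping is the correct one.)
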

\begin{proof}[Proof of Lemma \ref{l:dere}]
Let $P_s = e^{(\ln s) \mathfrak{e}}$ be the dilation along the fibers. We have the following commutation rule for the flows of $\vec{H}$ and $\mathfrak{e}$
\begin{equation}
P_{-s} \circ e^{-t\vec{H}} \circ P_s = e^{ -ts\vec{H}}.
\end{equation}
Computing the derivative w.r.t $t$ and $s$ at $(t,s) = (0,1)$ we obtain $[\vec{H},\mathfrak{e}] = - \mathfrak{e}$, that implies the statement.
\end{proof}
\begin{lemma}\label{l:mform0}
 Since $\mathfrak{e}$ is vertical, then $\mathfrak{e} = v(t)^* E(t)$ for some smooth $v(t) \in \R^n$. Accordingly with the decomposition of Remark~\ref{rmk:notation}, we set
\begin{equation}
v(t) = (v_1(t),\ldots,v_k(t))^{*}, \quad \text{with} \quad v_a(t) =  (v_{a1}(t),\ldots,v_{an_a}(t))^{*}.
\end{equation}
Then $v(t)$ is constant and we have
\begin{equation}
\mathfrak{e}=\sum_{\substack{ai\in \y \\ n_{a}=1}}    v_{ai} E_{ai}.
\end{equation}
\end{lemma}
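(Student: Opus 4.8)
The plan is to combine Lemma~\ref{l:dere}, which gives $\dot{\mathfrak{e}}=-\vec{H}$, with the fact that $\vec{H}$ is itself a sub-Riemannian Jacobi field: indeed $\dot{\vec{H}}=0$ because $\vec{H}$ is invariant under its own flow, so $\ddot{\mathfrak{e}}=0$ as well. Since $\mathfrak{e}$ is vertical, write $\mathfrak{e}=\sum_{ai}v_{ai}(t)E_{ai}(t)$ in a canonical frame. Differentiating along $\lambda(t)$ by means of the structural equations~\eqref{zelframe} and using $\dot{\mathfrak{e}}=-\vec{H}$, one identifies the components of $\vec{H}$ in the canonical frame: with the convention $v_{a(n_a+1)}:=0$, the $F$-part of $\vec{H}$ is $\sum_a v_{a1}F_{a1}$ and its $E$-part is $-\sum_{ai}(\dot v_{ai}+v_{a(i+1)})E_{ai}$; equivalently, in the notation of~\eqref{eq:Jacobicoord2} the components $(p,x)$ of $\vec{H}$ are $x_{ai}=\delta_{i1}v_{a1}$ and $p_{ai}=-(\dot v_{ai}+v_{a(i+1)})$.

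The next step is to impose that $(p,x)$ solve the Jacobi equation~\eqref{eq:Jacobicoord2}, which is legitimate since $\dot{\vec{H}}=0$. The block $\dot x=C_2 p+C_1^{*}x$ reads $\dot x_{ai}=\delta_{i1}p_{a1}+x_{a(i-1)}$. For $i\ge 2$ its left-hand side vanishes (as $x_{ai}=0$), forcing $x_{a(i-1)}=0$ for $i=2,\dots,n_a$; in particular $v_{a1}=x_{a1}=0$ for every row with $n_a\ge 2$. For $i=1$ it reads $\dot v_{a1}=p_{a1}=-(\dot v_{a1}+v_{a2})$, that is $2\dot v_{a1}+v_{a2}=0$, whence $v_{a2}=0$ when $n_a\ge 2$ and $v_{a1}$ is constant when $n_a=1$. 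In particular $\dot\gamma=\pi_{*}\vec{H}=\sum_{n_a=1}v_{a1}X_{a1}$, the vector $x$ is constant and supported on the boxes $\{a1:n_a=1\}$, and $p_{a1}=-(\dot v_{a1}+v_{a2})=0$ for every row.

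It remains to kill $v_{ai}$ for $i\ge 3$, which occur only in rows of length $\ge 3$. For this one uses the other block $\dot p=-C_1 p-Rx$ of~\eqref{eq:Jacobicoord2}, which along a row $a$ reads $\dot p_{ai}=-p_{a(i+1)}-\sum_{n_c=1}R_{ai,c1}v_{c1}$, only the boxes $c1$ with $n_c=1$ contributing since $x$ is supported there. At this point one invokes the normality of $R$ (Appendix~\ref{s:appendixnormal}): it forces $R_{ai,c1}=0$ whenever $n_c=1$ and $i<n_a$, so the source term vanishes at every box $ai$ that is not the last of its row, while the instance $i=1$, $n_a=1$ of the same equation (in which the term $p_{a2}$ is absent) gives $\sum_{n_c=1}R_{a1,c1}v_{c1}=0$. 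Starting from $p_{a1}=0$ and moving up each row, this recursion yields $p_{ai}=0$ for every box $ai$. Unwinding $p_{ai}=-(\dot v_{ai}+v_{a(i+1)})=0$ gives $\dot v_{ai}=-v_{a(i+1)}$; combined with $v_{a1}=v_{a2}=0$ for $n_a\ge 2$, an upward induction on $i$ shows that $v_{ai}=0$ for all $i$ in every row of length $\ge 2$, while $v_{a1}$ is constant in every row of length $1$. This is exactly the statement.

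The bookkeeping in the canonical frame is routine; the delicate point is the last step, because the $F$-part of the Jacobi equation only controls the first two columns of each row, and propagating the vanishing of $v_{ai}$ up rows of length $\ge 3$ genuinely requires the precise vanishing pattern of the normal curvature matrix $R$.
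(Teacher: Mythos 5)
Your proof is correct and is essentially the paper's own argument: both rest on $\ddot{\mathfrak{e}}=0$ (equivalently, on the fact that $\vec{H}=-\dot{\mathfrak{e}}$ is a Jacobi field), the structural equations, the block structure of $C_1,C_2$, and the normal conditions on $R$; you merely unpack the paper's matrix identities \eqref{eq:Fpart}--\eqref{eq:Epart} component by component and replace the final appeal to the Kalman rank condition \eqref{eq:Kalman} by the direct upward induction $\dot v_{ai}=-v_{a(i+1)}$. One index slip should be fixed: for $n_a>n_c=1$ the normal conditions (the last $2n_c=2$ entries of Table~\ref{t:normaltable}) only force $R_{ai,c1}=0$ for $i\le n_a-2$, not for all $i<n_a$, so the source term may survive at the box $a(n_a-1)$ and the recursion only yields $p_{ai}=0$ for $i\le n_a-1$, not ``for every box''. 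This is harmless: the induction killing $v_{a3},\dots,v_{an_a}$ uses $p_{ai}=0$ only up to $i=n_a-1$, and the vanishing of the source is needed only for $i\le n_a-2$, exactly the range the normal conditions provide.
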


\begin{proof}[Proof of Lemma~\ref{l:mform0}]
As a consequence of Lemma~\ref{l:dere}, $\ddot{\mathfrak{e}} = 0$. Using the structural equations~\eqref{eq:Jacobiframe2}, we obtain
\begin{align}
C_1^* C_2v - C_2 C_1 v - 2 C_2 \dot{v} & = 0,  \label{eq:Fpart}\\
\ddot{v} + 2 C_1 \dot{v} + C_1^2 v  - RC_2 v  &= 0. \label{eq:Epart}
\end{align}
We show that for any row index of the Young diagram $a=1,\ldots,k$
\begin{equation}
v_a = \begin{cases}
(0,\ldots,0)^{*} & n_a > 1, \\
\text{constant} & n_a = 1.
\end{cases}
\end{equation}

Let us focus on~\eqref{eq:Fpart}. For each $a=1,\ldots,k,$ we take its $a$-th block. By the block structure of $C_1$ and $C_2$, this is
\begin{equation}\label{eq:Fpart2}
C_1^* C_2 v_a - C_2 C_1 v_a - 2 C_2 \dot{v}_a  = 0, \qquad  \forall\, a=1,\ldots,k,
\end{equation}
where here $C_1 = C_1(\y_a)$ and $C_2 = C_2(\y_a)$. If $n_a = 1$, then $C_1 = 0$ and $C_2 = 1$. In this case~\eqref{eq:Fpart2} implies $v_a(t) = v_a$ is constant. Now let $n_a > 1$. In this case, the particular form of $C_1,C_2$ for~\eqref{eq:Fpart2} yields
\begin{equation}
C_1^* C_2 v_a = 0, \qquad \text{and} \qquad C_2 C_1 v_a + 2C_2 \dot{v}_a = 0, \qquad (n_a > 1).
\end{equation}
Indeed the kernel of $C_1^*$ is orthogonal to the image of $C_2$. Hence $C_1^* C_2 v_a=0$ implies $C_2 v_a = 0$. In particular~\eqref{eq:Fpart2} is equivalent to
\begin{equation}\label{eq:Fpart3}
C_2 v_a = 0,  \qquad C_2 C_1 v_a = 0, \qquad (n_a > 1).
\end{equation}
More explicitly, $v_a = (0,0,v_{a3},\ldots,v_{an_a})$. For the case $n_a =2$ this is sufficient to completely determine $v_a$. In all the other cases, let us turn to~\eqref{eq:Epart}. The latter does not split immediately, as the curvature matrix $R$ is not block-diagonal. However, let us consider a copy of~\eqref{eq:Epart} multiplied by $C_2 C_1^i$. For each $a$ such that $n_{a}>2$ we consider its $a$-th block, obtaining the following:
\begin{equation}\label{eq:Epart2}
C_2 C_1^i \ddot{v}_a + 2 C_2 C^{i+1}_1 \dot{v}_a + C_2 C_1^{i+2} v_a  - [C_2 C_1^i RC_2 v]_a  = 0, \qquad (n_a > 2).
\end{equation}
We claim that $[C_2 C_1^i RC_2 v]_a = 0$ if $n_a > 2$ and $i < n_a -2$. 

By setting the matrix $[R_{ab}]_{ij} := R_{ai,bj}$, with $ai,bj \in \y$ (this is a block of $R$, corresponding to the rows $a,b$ of the Young diagram $\y$), we compute
\begin{align}
[C_2 C_1^i R C_2 v]_a & = \sum_{b,c,d=1}^k [C_2 C_1^i]_{ab} R_{bc} [C_2]_{cd} v_d = \sum_{b=1}^k (C_2 C_1^i) R_{ab} (C_2 v_b) \\
& = \sum_{n_b =1} (C_2 C_1^i) R_{ab} (C_2 v_b)  = \sum_{n_b =1} R_{a(i+1),b1} v_{b1},
\end{align}
where we used the block structure of the $C_i$'s and \eqref{eq:Fpart3}. The last sum involves only $R_{a(i+1),b1}$ with $n_b =1$ and $n_a > 2$. If $i< n_a -2$, then $R_{a(i+1),b1}$ is \emph{not} in the last $2n_b = 2$ elements of Table~\ref{t:normaltable}, and vanishes by the normal conditions (see Appendix \ref{s:appendixnormal}).
Thus we have:
\begin{equation}\label{eq:Epart3}
C_2 C_1^i \ddot{v}_a + 2 C_2 C^{i+1}_1 \dot{v}_a + C_2 C_1^{i+2} v_a=0 , \qquad (n_a > 2, \quad i < n_a -2).
\end{equation}
In particular using~\eqref{eq:Fpart3}, and taking $i=0,\ldots,n_a-3$ we see that~\eqref{eq:Epart3} is equivalent to $C_2 C_1^{i+2} v_a = 0$ for all $i=0,\ldots,n_{a}-3$. Combining all the cases
\begin{equation}
v_a \in \ker\{ C_2, C_2 C_1, C_2 C_1^2,\ldots,C_2 C_1^{n_a -1}\}, \qquad (n_a > 1).
\end{equation}
This yields $v_a = 0,$ by Kalman rank condition~\eqref{eq:Kalman}. 
\end{proof}
Lemma \ref{l:mform0} implies our statement since 
\begin{equation}
\vec{H}=-\dot{\mathfrak{e}}=-\sum_{\substack{ai\in \y \\ n_{a}=1}}    v_{ai} \dot E_{ai}=\sum_{\substack{ai\in \y \\ n_{a}=1}}    v_{ai}  F_{ai}\in \hor,
\end{equation}
where we used the structural equations \eqref{zelframe} for the $E_{ai}$'s with $n_{a}=1$.
\end{proof}

\subsection{Relation with the canonical curvature}
We now discuss the relation between the curvature of the canonical Ehresmann connection and  the sub-Riemannian curvature operator. In what follows we denote by $\Rcan_{\lambda}:=\Rcan_{\lam(0)}$, where $\lam(t)$ is the extremal with initial datum $\lam$.
Then $\Rcan$ extends to a well defined map
\begin{equation} \label{eq:cancurva}
\begin{gathered}
\Rcan : \Gamma(T^*M) \times \Gamma(TM) \times \Gamma(TM) \to C^\infty(M), \\ 
(\lambda, X, Y) \mapsto \Rcan_\lambda(X,Y).
\end{gathered}
\end{equation}
We stress that here the first argument is a section $\lam\in\Gamma(T^*M)$. 

Although $\Rcan$ is $C^\infty(M)$-linear in the last two arguments by construction, it is in general non-linear in the first argument, so it does not define a $(1,2)$ tensor. Nevertheless, for any fixed section $\lambda \in \Gamma(T^*M)$, the restriction $\Rcan_\lambda : \Gamma(TM) \times \Gamma(TM) \to C^\infty(M)$ is a $(0,2)$ symmetric tensor.

\begin{theorem} \label{t:canehr}
Let $R^{\nabla} : \Gamma(TM) \times \Gamma(TM) \to \Gamma(\ver)$ be the  curvature of the canonical Ehresmann connection, and let $\Rcan: \Gamma(T^*M) \times \Gamma(TM) \times \Gamma(TM) \to C^\infty(M)$ be the canonical curvature map \eqref{eq:cancurva}. Then
\begin{equation}\label{eq:relation}
\Rcan_\lambda(X,Y) = \sigma_\lambda(R^{\nabla}(\tanf,X),\nabla_Y), \qquad  \forall\,\lambda \in \Gamma(T^*M), \quad X,Y \in \Gamma(TM),
\end{equation}
where $\tanf = \pi_*\vec{H}|_\lambda \in \Gamma(TM)$.
\end{theorem}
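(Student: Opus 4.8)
The plan is to verify the identity \eqref{eq:relation} by unwinding the definition of the Ehresmann curvature $R^\nabla$ in terms of the horizontal lift $\nabla$, and relating it to the structural equations \eqref{zelframe} along a single extremal. The key observation is that both sides of \eqref{eq:relation} are $C^\infty(M)$-linear and tensorial in $X$ and $Y$, so it suffices to check the identity pointwise at an arbitrary $\lambda \in N$ (evaluating the section $\lambda$ at the base point), and for $X,Y$ chosen conveniently. In particular, it is enough to evaluate both sides on the basis $X = X_{bj}$, $Y = X_{ch}$ obtained by projecting the canonical frame, where $X_{ai}|_{\gamma(t)} = \pi_* F_{ai}|_{\lambda(t)}$. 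Then the right-hand side becomes $\sigma_\lambda(R^\nabla(\tanf, X_{bj}), \nabla_{X_{ch}}) = \sigma_\lambda(R^\nabla(\tanf, X_{bj}), F_{ch})$, since $\nabla_{X_{ch}}|_\lambda = F_{ch}|_\lambda$ by the very definition \eqref{eq:cancon} of $\hor$, and the left-hand side is, by Definition~\ref{d:curv}, exactly the matrix entry $R_{bj,ch}(0) = \mathfrak{R}_\lambda(X_{bj},X_{ch})$.

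First I would recall that $\tanf = \pi_*\vec H$, and that $\vec H \in \hor$ by the Proposition just proved; hence the horizontal lift $\nabla_\tanf$ coincides with $\vec H$ (both are horizontal and project to $\tanf$, and the horizontal lift is unique). Next, the crucial dynamical input: for a horizontal vector field $W \in \Gamma(\hor)$ along the extremal $\lambda(t)$, the Lie derivative $\dot W = \mathcal L_{\vec H} W$ is precisely the obstruction measured by the curvature, i.e. I would show $[\nabla_\tanf, \nabla_X]|_\lambda - \nabla_{[\tanf,X]}|_\lambda = \dot{(\nabla_X)}|_{\lambda(t)}\big|_{t=0} - (\text{vertical correction})$. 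Concretely: extending $X$ to an admissible frame near $\gamma$, $\nabla_X$ is a vector field on (a neighbourhood in) $T^*M$; its bracket with $\vec H$ restricted to the extremal is $\mathcal L_{\vec H}\nabla_X$, which by the definition of the dot notation is $\dot{(\nabla_X)}$ along $\lambda(t)$. The term $\nabla_{[\tanf,X]}$ accounts for the fact that $\tanf$ and $X$ are $M$-vector fields whose bracket must be lifted horizontally; evaluating at $\lambda$ and using $\pi_* \dot{(\nabla_X)} = \dot X$ along $\gamma$, one identifies $R^\nabla(\tanf,X)|_\lambda$ as the vertical part of $\dot{(\nabla_X)}|_\lambda$. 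Applying this with $X = X_{bj}$: since $\nabla_{X_{bj}}$ agrees with $F_{bj}$ along $\lambda(t)$ (this requires checking that the canonical frame $F_{ai}(t)$ along the extremal is the restriction of the horizontal lift of $X_{ai}$, which follows from \eqref{eq:cancon} applied at every $\lambda(t)$ together with smoothness of $\hor$ on $N$), the structural equations \eqref{zelframe} give $\dot F_{bj} = \sum_{ch} R_{bj,ch} E_{ch} - F_{b(j+1)}$, whose vertical part is $\sum_{ch} R_{bj,ch} E_{ch}$ — the $-F_{b(j+1)}$ term being horizontal.

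Finally I would assemble the computation: pairing with $\nabla_{X_{ch}}|_\lambda = F_{ch}|_\lambda$ under $\sigma_\lambda$ and using the Darboux relations $\sigma(E_{ai},F_{ch}) = \delta_{ac}\delta_{ih}$, $\sigma(F_{ai},F_{ch})=0$ from Theorem~\ref{p:can}(ii), one gets
\[
\sigma_\lambda(R^\nabla(\tanf,X_{bj}),\nabla_{X_{ch}}) = \sigma_\lambda\Big(\sum_{di \in \y} R_{bj,di}(0) E_{di}\big|_\lambda,\ F_{ch}\big|_\lambda\Big) = R_{bj,ch}(0) = \Rcan_\lambda(X_{bj},X_{ch}),
\]
which is \eqref{eq:relation} on the chosen basis, hence in general by tensoriality. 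I expect the main obstacle to be the careful bookkeeping in the second step: making precise that $\mathcal L_{\vec H}\nabla_X$ restricted to the extremal really is the ``dot'' of the canonical frame, and correctly isolating the role of the $\nabla_{[\tanf,X]}$ term so that it cancels the horizontal contributions (the $-F_{b(j+1)}$ shifts) and leaves only the vertical curvature block. This is where one must be scrupulous that $R^\nabla$ is defined via global vector fields on $M$ while the structural equations live along one extremal; the $C^\infty(M)$-linearity in $X,Y$ is what licenses passing between the two viewpoints, and checking that linearity honestly (the connection $\nabla$ is generally nonlinear in $\lambda$, but that does not affect tensoriality in $X,Y$) is the delicate point.
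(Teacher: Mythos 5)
Your proof is correct and follows essentially the same route as the paper's: reduce by tensoriality to the projected canonical frame, use $\nabla_{\tanf}=\vec{H}$, identify $[\vec{H},\nabla_{X}]$ along the extremal with $\dot{F}_{ai}$, and read off $R_{ai,bj}(0)$ from the structural equations together with the Darboux relations (the $\nabla_{[\tanf,X]}$ and $-F_{a(i+1)}$ terms dying against $F_{bj}$ because $\hor$ is Lagrangian). Your extra care in checking $C^\infty(M)$-linearity and the identification $\nabla_{X_{ai}}|_{\lambda(t)}=F_{ai}(t)$ only makes explicit steps the paper leaves implicit.
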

\begin{proof}
We evaluate the right hand side of \eqref{eq:relation} at the point $x$, for any fixed section $\lambda=\lam(x) \in \Gamma(T^*M)$. By linearity, it is sufficient to take $X = X_{ai}$ and $Y=Y_{bj}$, projections of a canonical frame $F_{ai}|_{\lam},F_{bj}|_{\lam}$ at $t=0$. Indeed, by definition, $ \nabla_{X_{ai}}|_{\lam} = F_{ai}|_{\lam}$. Then
\begin{align*}
\sigma_\lambda(R^\nabla(\tanf,X_{ai}),\nabla_{X_{bj}}) & = \sigma_\lambda([\nabla_{\tanf},F_{ai}],F_{bj}) 
 = \sigma_\lambda([\vec{H},F_{ai}],F_{bj}) \\
& = \sigma_\lambda(\dot{F}_{ai}, F_{bj}) 
= R^\lambda_{ai,bj}(0).
\end{align*}
Here we used the structural equations and that $\vec{H} \in \hor$, thus $\nabla_{\tanf} = \vec{H}$. By definition of canonical curvature map, we obtain the statement.
\end{proof}
\begin{remark}
For $\lambda \in \Gamma(T^*M)$, the corresponding tangent field $\tanf \in \Gamma(\distr) \subsetneq \Gamma(TM)$. Therefore, $\Rcan$ recovers only part of the whole Ehresmann connection.
\end{remark}
\begin{remark}[On the Riemannian case] As we proved in \eqref{eq:trc}, we have 
\[
\Rcan_\lambda(X,Y) =R^\nabla(\tanf,X,Y,\tanf),
\]
 where $\tanf = \pi_*\vec{H}|_\lambda$ is the tangent vector associated with the covector $\lambda$. For  completeness, let us recover the same formula by the r.h.s. of \eqref{eq:relation}. Indeed, for any vertical vector $V \in \ver_\lambda$ and $W \in T_{\lambda}(T^*M)$, we have $\sigma_{\lambda}(V,W) = V(h_{\pi_*W})|_{\lambda}$ as one can check from a direct computation. Thus the r.h.s. of \eqref{eq:relation}  is
\begin{align*}
 \sigma_\lambda([\nabla_\tanf,\nabla_X]-\nabla_{[\tanf,X]},\nabla_Y)  
& = \left(\nabla_\tanf \nabla_X (h_Y) - \nabla_X \nabla_\tanf(h_Y) - \nabla_{[\tanf,X]} (h_Y)\right)|_{\lambda} \\ 
& = h_{\nabla_{\tanf}\nabla_X Y - \nabla_X \nabla_\tanf Y - \nabla_{[\tanf,X]} Y}(\lambda) \\
& = \langle \lambda, \nabla_{\tanf}\nabla_X Y - \nabla_X \nabla_\tanf Y - \nabla_{[\tanf,X]} Y \rangle \\
& = g(\nabla_{\tanf}\nabla_X Y - \nabla_X \nabla_\tanf Y - \nabla_{[\tanf,X]} Y, \tanf) \\
& = R^\nabla(\tanf,X,Y,\tanf).
\end{align*}
\end{remark}

\appendix
\section{Normal condition for the canonical frame}\label{s:appendixnormal}
Here we rewrite the \emph{normal} condition for the matrix $R_{ai,bj}$ mentioned in Theorem \ref{p:can} (and defined in \cite{lizel}) according to our notation.

\begin{definition} The matrix $R_{ai,bj}$ is \emph{normal} if it satisfies:
\begin{itemize}
\item[(i)] global symmetry: for all $ai,bj\in \y$
\[R_{ai,bj}=R_{bj,ai}.\]
\item[(ii)] partial skew-symmetry: for all $ai,bi\in \y$ with $n_{a}=n_{b}$ and $i<n_{a}$ 
\[R_{ai,b(i+1)}=- R_{bi,a(i+1)}.\]
\item[(iii)] vanishing conditions: the only possibly non vanishing entries $R_{ai,bj}$ satisfy
\begin{itemize}
\item[(iii.a)] $n_{a}=n_{b}$ and $|i-j|\leq 1$,
\item[(iii.b)] $n_{a}>n_{b}$ and $(i,j)$ belong to the last $2n_{b}$ elements of Table~\ref{t:normaltable}.
\begin{table}[htbp]
\begin{center}
\caption{Vanishing conditions.}
\begin{tabular}{|c||c|c|c|c|c|c|c|c|c|c|c|c|c|}
\hline
$i$ & $1$ & $1$ & $2$ & $\cdots$ & $\ell$ & $\ell$ & $\ell+1$ & $\cdots$ & $n_{b}$ & $n_{b}+1$ & $\cdots$ & $n_a-1$ & $n_a$ \\
\hline
$j$ & $1$ & $2$ & $2$ & $\cdots$ & $\ell$ & $\ell+1$ & $\ell+1$ & $\cdots$ & $n_b$ & $n_b$ & $\cdots$ & $n_b$ & $n_b$ \\
\hline
\end{tabular}
\label{t:normaltable}
\end{center}
\end{table}
\end{itemize}
\end{itemize}
The sequence is obtained as follows: starting from $(i,j)=(1,1)$ (the first boxes of the rows $a$ and $b$), each next even pair is obtained from the previous one by increasing $j$ by one (keeping $i$ fixed). Each next odd pair is obtained from the previous one by increasing $i$ by one (keeping $j$ fixed). This stops when $j$ reaches its maximum, that is $(i,j) = (n_b,n_b)$. Then, each next pair is obtained from the previous one by increasing $i$ by one (keeping $j$ fixed), up to $(i,j) = (n_a,n_b)$. The total number of pairs appearing in the table is $n_b+n_a-1$.
\end{definition}

\addtocontents{toc}{\protect\setcounter{tocdepth}{0}}
\section*{Acknowledgments} 
This research has  been supported by the European Research Council, ERC StG 2009 ``GeCoMethods'', contract number 239748, by the iCODE institute (research project of the Idex Paris-Saclay), and by the Grant ANR-15-CE40-0018 of the ANR. This research benefited from the support of the ``FMJH Program Gaspard Monge in optimization and operation research'' and from the support to this program from EDF.

\bibliographystyle{abbrv}
\bibliography{Connection-Biblio}

\def\cprime{$'$}
\begin{thebibliography}{10}

\bibitem{agrachevopen}
A.~Agrachev.
\newblock Some open problems.
\newblock In {\em Geometric control theory and sub-{R}iemannian geometry},
  volume~5 of {\em Springer INdAM Ser.}, pages 1--13. Springer, Cham, 2014.

\bibitem{curvature}
A.~Agrachev, D.~Barilari, and L.~Rizzi.
\newblock Curvature: a variational approach.
\newblock {\em Memoirs of the AMS (in press)}.

\bibitem{nostrocontact}
A.~Agrachev, D.~Barilari, and L.~Rizzi.
\newblock Sub-riemannian curvature in contact geometry.
\newblock {\em The Journal of Geometric Analysis}, pages 1--43, 2016.

\bibitem{agzel1}
A.~Agrachev and I.~Zelenko.
\newblock Geometry of {J}acobi curves. {I}.
\newblock {\em J. Dynam. Control Systems}, 8(1):93--140, 2002.

\bibitem{agzel2}
A.~Agrachev and I.~Zelenko.
\newblock Geometry of {J}acobi curves. {II}.
\newblock {\em J. Dynam. Control Systems}, 8(2):167--215, 2002.

\bibitem{nostrolibro}
A.~A. Agrachev, D.~Barilari, and U.~Boscain.
\newblock Introduction to {R}iemannian and sub-{R}iemannian geometry ({L}ecture
  {N}otes), http://webusers.imj-prg.fr/~davide.barilari/notes.php.
\newblock 2015.

\bibitem{BR-comparison}
{Barilari, D.} and {Rizzi, L.}
\newblock Comparison theorems for conjugate points in sub-riemannian geometry.
\newblock {\em ESAIM: COCV}, 2016.

\bibitem{notejean}
F.~Jean.
\newblock {\em Control of nonholonomic systems: from sub-{R}iemannian geometry
  to motion planning}.
\newblock Springer Briefs in Mathematics. Springer, Cham, 2014.

\bibitem{KNFDG}
S.~Kobayashi and K.~Nomizu.
\newblock {\em Foundations of differential geometry. {V}ol. {I}}.
\newblock Wiley Classics Library. John Wiley \& Sons, Inc., New York, 1996.
\newblock Reprint of the 1963 original, A Wiley-Interscience Publication.

\bibitem{lizel2}
C.~Li and I.~Zelenko.
\newblock Jacobi equations and comparison theorems for corank 1
  sub-{R}iemannian structures with symmetries.
\newblock {\em J. Geom. Phys.}, 61(4):781--807, 2011.

\bibitem{montgomeryabnormal}
R.~Montgomery.
\newblock Abnormal minimizers.
\newblock {\em SIAM J. Control Optim.}, 32(6):1605--1620, 1994.

\bibitem{montgomerybook}
R.~Montgomery.
\newblock {\em A tour of subriemannian geometries, their geodesics and
  applications}, volume~91 of {\em Mathematical Surveys and Monographs}.
\newblock American Mathematical Society, Providence, RI, 2002.

\bibitem{noterifford}
L.~Rifford.
\newblock {\em Sub-{R}iemannian geometry and optimal transport}.
\newblock Springer Briefs in Mathematics. Springer, Cham, 2014.

\bibitem{RS-3-Sasakian}
L.~{Rizzi} and P.~{Silveira}.
\newblock {Sub-Riemannian Ricci curvatures and universal diameter bounds for
  3-Sasakian manifolds}.
\newblock {\em ArXiv e-prints}, Sept. 2015.

\bibitem{lizel}
I.~Zelenko and C.~Li.
\newblock Differential geometry of curves in {L}agrange {G}rassmannians with
  given {Y}oung diagram.
\newblock {\em Differential Geom. Appl.}, 27(6):723--742, 2009.

\end{thebibliography}
\end{document}